\newtheorem{theorem}{Theorem}
\newtheorem{example}{Example}%
\newtheorem{remark}{Remark}%
\newtheorem{lemma}{Lemma}%
\newtheorem{corollary}{Corollary}
\begin{document}

\title[A Nehari manifold method for nonvariational problems]{A Nehari manifold method for nonvariational problems}








\author[1,2]{\fnm{Radu} \sur{Precup}}\email{radu.precup@ubbcluj.ro}

\author*[2,3]{\fnm{Andrei} \sur{Stan}}\email{andrei.stan@ubbcluj.ro}
\equalcont{These authors contributed equally to this work.}
\affil*[1]{ \orgname{Faculty of Mathematics and Computer Science, Babeș-Bolyai
University}, \orgaddress{ \city{Cluj-Napoca}, \postcode{400084}, \country{Romania}}}

\affil[2]{ \orgname{Institute of Advanced Studies in Science and Technology, Babeş-Bolyai University}, \orgaddress{ \city{Cluj-Napoca}, \postcode{400110}, \country{Romania}}}

\affil[3]{ \orgname{Tiberiu Popoviciu
Institute of Numerical Analysis, Romanian Academy}, \orgaddress{ \city{Cluj-Napoca}, \postcode{400110}, \country{Romania}}}


\abstract{The aim of this paper is to extend the Nehari manifold method from the variational setting to the nonvariational framework of fixed point equations. This is achieved  by constructing a radial energy functional that generalizes the standard one from the variational case. Furthermore, the solutions obtained through our method are localized in conical annular sets, which leads to the existence of multiple solutions. The abstract results are illustrated by two representative applications.
}

\keywords{Nehari manifold method, nonvariational problems, fixed point, radial energy, $p$-Laplacian equation}



\maketitle

\section{Introduction and preliminaries}

The Nehari manifold method is a well-known technique for finding nonzero
critical points of functionals. Given a $C^{1}$ functional $E$ defined on a
Banach space $X,$ the Nehari manifold is defined as:%
\begin{equation*}
\mathcal{N}=\left\{ u\in X\setminus \left\{ 0\right\} :\ \left\langle
E^{\prime }\left( u\right) ,u\right\rangle _{X}=0\right\} ,
\end{equation*}%
where $\left\langle \cdot ,\cdot \right\rangle _{X}$ denotes the duality
between $X$ and its dual $X^{\ast }.$ It is clear that any nonzero critical
point of $E$ lies in $\mathcal{N}.$ Therefore, one may expect that it is
easier to find nonzero critical points by restricting the search to this
smaller set, as fewer or weaker conditions may be required compared to
working in the entire space $X.$ This is the case, for example, with the
Palais-Smale compactness condition. Once the Nehari manifold is defined, the
method proceeds by minimizing $E$ on $\mathcal{N}.$ This approach is
effective when $E$ has a specific geometric structure, namely, for each
direction $v\in X,$ $\left\vert v\right\vert =1,$ the function
\begin{equation*}
0<t\ \mapsto \ E\left( tv\right)
\end{equation*}%
has a unique critical point at some $t_{v}>0.$ In this case, the derivative
satisfies
\begin{equation*}
\frac{d}{dt}\left. E\left( tv\right) \right\vert _{t=t_{v}}=\left\langle
E^{\prime }\left( t_{v}v\right) ,v\right\rangle _{X}=0,
\end{equation*}%
so $u=t_{v}v\in $ $\mathcal{N}.$ Conversely, if $u\in \mathcal{N},$ then
\begin{equation*}
0=\left\langle E^{\prime }\left( u\right) ,u\right\rangle _{X}=\left\langle
E^{\prime }\left( \left\vert u\right\vert \frac{u}{\left\vert u\right\vert }%
\right) ,\frac{u}{\left\vert u\right\vert }\right\rangle _{X},
\end{equation*}%
showing that $t=\left\vert u\right\vert $ is a critical point of the
function $E\left( t\frac{u}{\left\vert u\right\vert }\right) ,$ i.e., $t_{%
\frac{u}{\left\vert u\right\vert }}=\left\vert u\right\vert .$ Thus, under
this geometric condition, the Nehari manifold can be characterized as%
\begin{equation*}
\mathcal{N}=\left\{ t_{v}v:\ v\in X,\ \left\vert v\right\vert =1\right\} .
\end{equation*}
This method  has received increasing attention in recent years and has been successfully applied to various types of  problems~\cite{nehari_opt, nehari_parabolic1, faraci-silva, nehari_parabolic2, n1}.
In the recent papers \cite{a,pa}, this technique has been refined to locate critical points
within cones, which is useful when searching for solutions with additional
properties such as positivity. Also, in \cite{pa2}, the method has been combined with Schauder's fixed point theorem to obtain a localized solution for a system. 


Compared to other techniques in critical point theory---such as general
linking methods, and particularly the mountain pass theorem---the Nehari
manifold method essentially requires only a radial behavior of the energy
functional. Motivated by this observation, in this paper we propose a
similar approach within the framework of fixed point theory, specifically
for equations of non-variational type. To extend the technique to a fixed
point equation of the form $u=T\left( u\right) ,$ we begin by introducing a
three-variable functional $\mathcal{F}\left( u,v,w\right) $ which is used to
define a radial energy type function $\mathcal{E}$ in any direction $v.$
That is, for any unit vector $v$ and $t>0,$ we define:%
\begin{equation*}
\mathcal{E}\left( v\right) \left( t\right) =\int_{0}^{t}\mathcal{F}\left(
T\left( sv\right) ,sv,v\right) ds.
\end{equation*}%
For each direction $v,$ the derivative of $\mathcal{E}\left( v\right) $ with
respect to $t$ is given by%
\begin{equation*}
\frac{d}{dt}\mathcal{E}\left( v\right) \left( t\right) =\mathcal{F}\left(
T\left( tv\right) ,tv,v\right) .
\end{equation*}%
This expression suggests that,
for some direction $v$, the quantity
$$t\mapsto\mathcal{F}\left( T\left( tv\right)
,tv,v\right) $$ can be interpreted 
as the  (radial) potential of the energy
functional $\mathcal{E}(v)$ at the moment $t$. This interpretation is natural since, as shown in the paper, 
in the particular case of equations
possessing a variational structure, i.e., the operator $T$ is of potential type in the sense that $T=I-E'$, for some functional $E$, then the value of  $\mathcal{E}(v)(t)$ corresponds to $E(tv)$. In this situation, the classical Nehari manifold \( \mathcal{N} \) is recovered, and existing methods apply.
We
emphasize that even in this case, our results significantly refine the
classical ones by localizing the critical point and by imposing a weaker
condition on the radial geometry of the energy function, which, in our case,
requires only a unique maximum point in each direction rather than a unique
critical point which is a maximum.


Under suitable conditions on the operator $T$ and the functional $\mathcal{F}
$, and through a surprising combination with  the Birkhoff--Kellogg invariant-direction theorem, we
develop a Nehari-type method for establishing the existence of fixed points
of $T$ within a conical annular region of a Banach space.



To summarize, the novelties of this paper are the following:
\begin{itemize}
    \item[(a):]First and most importantly, we extend the Nehari manifold method from the variational setting to a nonvariational framework. This is achieved by constructing a radial energy that corresponds to the usual energy in the variational case. 
    \item[(b):] The solutions obtained by this method are localized in conical annular sets. By applying the method repeatedly on different annular sets, we obtain the existence of multiple solutions.
\item[(c):] For a given operator, we may construct multiple radial energies by choosing different radial potentials \( \mathcal{F} \). Thus, even in the variational case, the energy corresponds to a particular choice of \( \mathcal{F} \).
\end{itemize}
The structure of the paper is as follows. In Section~2 we present the main abstract result, introducing the functionals \(\mathcal{F}\) and \(\mathcal{E}\), together with the associated Nehari manifold. Section~3 is devoted to applications: first, we discuss a nonvariational \(p\)-Laplace problem illustrating our theoretical findings; second, we construct an example of \(\mathcal{F}\) and \(T\) such that, for each direction $v$,  the associated radial energy functional \(\mathcal{E}(v)\) has a unique maximum  but  two critical points, contrary to the common setting in the literature, to which, however, our result still applies.

We conclude this section with a series of basic well-known results from literature. The first result is Vitali's theorem (see, e.g., \cite[Lemma 9.1]{p semilinear}) used in Lemma \ref{bine definire nemitski}. 
\begin{theorem}
    Let $\Omega \subset \mathbb{R}^N$ be a bounded open set and let 
$(u_k)$ be a sequence of functions from $L^q(\Omega; \mathbb{R}^n)$ 
($1 \leq q< \infty$) with $u_k(x) \to u(x)$ as $k \to \infty$, for a.e.\ 
$x \in \Omega$. Then $u \in L^q(\Omega; \mathbb{R}^n)$ and 
$u_k \to u$ in $L^q(\Omega; \mathbb{R}^n)$ as $k \to \infty$, if and 
only if for each $\varepsilon > 0$ there exists a $\delta_\varepsilon > 0$ 
such that
\[
    \int_D |u_k|^q \, dx < \varepsilon
\]
for all $k$ and every $D \subset \Omega$ with $\mu(D) < \delta_\varepsilon$.
\end{theorem}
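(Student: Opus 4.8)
The plan is to prove the two implications separately, treating the statement as the Vitali convergence theorem on the finite-measure space $(\Omega,\mu)$, where finiteness $\mu(\Omega)<\infty$ is guaranteed by the boundedness of $\Omega$. The necessity direction ($\Rightarrow$) is the routine one and rests on the absolute continuity of the Lebesgue integral together with the assumed $L^q$ convergence; the sufficiency direction ($\Leftarrow$) is the substantive part and is where I expect the main work, relying on Egorov's theorem to convert the pointwise a.e.\ convergence into uniform convergence off a small set.

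For necessity, I would fix $\varepsilon>0$. Since $u_k\to u$ in $L^q$, there is an index $K$ beyond which $\|u_k-u\|_q^q$ is as small as desired, say below $\varepsilon/2^q$. The finitely many functions $u,u_1,\dots,u_{K-1}$ all lie in $L^q$, and for each $g\in L^q$ the absolute continuity of the integral yields a threshold $\delta_g>0$ with $\int_D|g|^q\,dx<\varepsilon$ whenever $\mu(D)<\delta_g$; I take $\delta_\varepsilon$ to be the minimum of these finitely many thresholds. For $k<K$ the conclusion is then immediate, while for $k\ge K$ I split $\int_D|u_k|^q\,dx$ using the convexity bound $|a+b|^q\le 2^{q-1}(|a|^q+|b|^q)$ applied to $u_k=(u_k-u)+u$, controlling one term by the norm estimate and the other by absolute continuity of $\int_D|u|^q\,dx$.

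For sufficiency, I first extract a uniform $L^q$ bound: choosing $\varepsilon=1$ in the hypothesis produces a $\delta$, and since $\mu(\Omega)<\infty$ I can partition $\Omega$ into finitely many pieces each of measure below $\delta$, whence $\sup_k\|u_k\|_q^q<\infty$. Fatou's lemma then gives $u\in L^q$ and, passing to the limit in the hypothesis on a fixed small set $D$, also $\int_D|u|^q\,dx\le\varepsilon$ whenever $\mu(D)<\delta$. To prove convergence, I fix $\varepsilon>0$ and the corresponding $\delta$, and by Egorov's theorem I choose $E\subset\Omega$ with $\mu(\Omega\setminus E)<\delta$ on which $u_k\to u$ uniformly. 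I then decompose
\[
\int_\Omega|u_k-u|^q\,dx=\int_E|u_k-u|^q\,dx+\int_{\Omega\setminus E}|u_k-u|^q\,dx .
\]
The first integral tends to $0$ because uniform convergence on the finite-measure set $E$ controls it, while the second is bounded, via $|u_k-u|^q\le 2^{q-1}(|u_k|^q+|u|^q)$ and the uniform-integrability estimates on $\Omega\setminus E$, by a constant multiple of $\varepsilon$ uniformly in $k$. Letting $k\to\infty$ and then $\varepsilon\to 0$ yields $u_k\to u$ in $L^q$.

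The main obstacle, and the step I would handle most carefully, is the sufficiency direction's use of Egorov's theorem: one must ensure that the exceptional set on which convergence fails to be uniform is selected with measure below the very $\delta$ furnished by the uniform-integrability hypothesis, so that the tail integral over $\Omega\setminus E$ is genuinely small \emph{and} uniform in $k$. Everything else reduces to bookkeeping with the convexity inequality and the finiteness of $\mu(\Omega)$.
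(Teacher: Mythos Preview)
Your proof is correct and follows the standard route to Vitali's convergence theorem: absolute continuity of the integral plus a tail estimate for necessity, and Egorov's theorem combined with the uniform-integrability hypothesis for sufficiency. Note, however, that the paper does not supply its own proof of this statement; it is quoted there as a well-known preliminary result with a reference to the literature (Lemma~9.1 in Pazy's monograph), so there is no in-paper argument to compare against. Your write-up would serve perfectly well as a self-contained justification should one be desired.
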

The next two results are essential in Theorems \ref{t1} and \ref{teorema alternative neliniara}, respectively. 
The first one is the version in cones due to Krasnoselskii
and Ladyzenskii\ \cite{kl} (see also \cite[p.139]{ag},  \cite{guo}), of the classical theorem of Birkhoff and Kellogg invariant-direction
theorem \cite{bk} (see also  \cite[Theorem 6.6]{ag}) regarding
the existence of a `nonlinear' eigenvalue and eigenvector for compact maps
in Banach spaces.
\begin{theorem}[Krasnoselskii and Ladyzenskii]
\label{tkl}Let $X$ be a real Banach space, $U\subset X$ be an open bounded
set with $0\in U$, $K\subset X$ a cone, and $T:K\cap \overline{U}\rightarrow
K$ a completely continuous operator. If
\begin{equation*}
\inf_{x\in K\cap \partial U}\left\vert T\left( x\right) \right\vert >0,
\end{equation*}%
then, there exist $\lambda _{0}>0$ and $x_{0}\in K\cap \partial U$ such that
\begin{equation*}
x_{0}=\lambda _{0}T\left( x_{0}\right) .
\end{equation*}
\end{theorem}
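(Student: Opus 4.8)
The plan is to convert the existence of an invariant direction into a fixed point statement for a rescaled operator, using the hypothesis $m:=\inf_{x\in K\cap\partial U}|T(x)|>0$ precisely to rescale without dividing by zero. I would first dispose of the model case $U=B_\rho$, the open ball of radius $\rho$ about $0$, which already carries the whole mechanism. On the closed, bounded, convex set $K\cap\overline{B_\rho}$ (convexity of the cone $K$ is used here), define $S(x)=\rho\,T(x)/|T(x)|$. Since $|T(x)|\ge m>0$, the normalization is continuous, so $S$ is completely continuous, and because $K$ is stable under multiplication by positive scalars, $S$ maps $K\cap\overline{B_\rho}$ into $K\cap\partial B_\rho$. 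Schauder's fixed point theorem then gives $x_0=S(x_0)$; as $S$ takes values on the sphere, $x_0\in K\cap\partial B_\rho$, and unwinding the definition yields $x_0=\lambda_0 T(x_0)$ with $\lambda_0=\rho/|T(x_0)|>0$.

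For a general open bounded $U$ with $0\in U$, the boundary $\partial U$ is no longer a norm level set, so direct normalization onto it fails; instead I would argue by contradiction with the fixed point index $i(\,\cdot\,,K\cap U,K)$. Suppose $x\neq\lambda T(x)$ for every $x\in K\cap\partial U$ and every $\lambda>0$. Then $\lambda T$ is fixed-point-free on $K\cap\partial U$ for all $\lambda>0$, so the index is defined, and the homotopy $H(t,x)=tT(x)$, $t\in[0,1]$, is admissible: at $t=0$ there is no boundary fixed point because $0\in U$, while for $t\in(0,1]$ a boundary fixed point would be an invariant direction. Homotopy invariance together with the normalization property gives $i(\lambda T,K\cap U,K)=i(0,K\cap U,K)=1$ for every $\lambda>0$.

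The remaining, and to my mind hardest, step is to force the index to equal $0$ for $\lambda$ large, contradicting the previous paragraph. This requires turning the boundary bound $m>0$ into an outward-pushing condition, e.g.\ establishing the index-zero criterion that, for some fixed $e\in K\setminus\{0\}$ and all large $\lambda$, one has $x\neq\lambda T(x)+s e$ on $K\cap\partial U$ for every $s\ge 0$. The subtlety is exactly that $m>0$ bounds only $|T|$ on the boundary and says nothing about the cancellation in $x-\lambda T(x)$; controlling this seems to need the order structure of the cone together with the a priori bound furnished by complete continuity of $T$ on the bounded set $K\cap\overline{U}$. The transparent ball case makes the result and its mechanism clear, and the general case is where the degree-theoretic bookkeeping becomes delicate.
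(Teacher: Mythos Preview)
The paper does not prove this theorem; it is quoted as a preliminary result from the literature (with references to Krasnoselskii--Ladyzenskii, Granas--Dugundji, and Guo--Lakshmikantham), so there is no argument in the paper to compare against. What follows addresses only the internal soundness of your proposal.

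Your ball-case argument contains a genuine gap. You define $S(x)=\rho\,T(x)/|T(x)|$ on all of $K\cap\overline{B_\rho}$ and justify the normalization by ``$|T(x)|\ge m>0$''. But $m$ is the infimum of $|T|$ over the \emph{boundary} $K\cap\partial B_\rho$ only; the hypothesis says nothing about $T$ vanishing in the interior, and it may. A standard repair (when $K$ is a proper cone, i.e.\ $K\cap(-K)=\{0\}$) is to perturb: set $T_\epsilon=T+\epsilon e$ for a fixed $e\in K\setminus\{0\}$, note that $T_\epsilon$ never vanishes on $K\cap\overline{B_\rho}$ since $T(x)\in K$ forces $T(x)\neq-\epsilon e$, run your Schauder argument for $T_\epsilon$, and pass to the limit $\epsilon\to0$ using compactness and the boundary bound $|T(x_0)|\ge m$ to keep $\lambda_\epsilon$ bounded.

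For general $U$ you set up the correct index-theoretic skeleton and correctly obtain $i(\lambda T,K\cap U,K)=1$ via the homotopy $tT$, but---as you yourself flag---you do not carry out the decisive step $i(\lambda T,K\cap U,K)=0$ for large $\lambda$. Your instinct that the cone structure is essential here is exactly right, and in fact indispensable: with $K=X=\mathbb{R}^2$, $U=B_1$, and $T$ the rotation by $\pi/2$, one has $|T|\equiv1$ on $\partial U$ yet no $x\in\partial U$ satisfies $x=\lambda T(x)$ with $\lambda>0$. The usual completion uses precisely the $e$-translation criterion you name together with the order induced by a (normal) cone: if $x=\lambda T(x)+se$ on $K\cap\partial U$ with $s\ge0$, then $\lambda T(x)=x-se\in K$ gives $0\le\lambda T(x)\le x$, and normality yields $\lambda m\le|\lambda T(x)|\le c|x|\le cR$ (with $\overline{U}\subset B_R$), impossible once $\lambda>cR/m$. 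That bound, not any further compactness, is the missing ingredient.
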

The second result is the well-known "nonlinear alternative" from compact mappings
 (see, e.g., \cite[Theorem 5.2]{ag}). \begin{theorem}[Nonlinear alternative]
\label{tna}
Let $X$ be a real Banach space, $K \subset X$ a convex set, 
and $U \subset K$ an open bounded set with $0 \in U$. 
Suppose that $T : \overline{U} \to K$ is a completely continuous operator. 
Then at least one of the following two alternatives holds:
\begin{enumerate}[(a)]
    \item $T$ has a fixed point in $\overline{U}$,
    \item there exist $x \in \partial U$ and $\lambda \in (0,1)$ such that
    \[
        x = \lambda T(x).
    \]
\end{enumerate}
\end{theorem}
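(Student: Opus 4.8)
My plan is to deduce the dichotomy by showing that if alternative (b) fails, then alternative (a) must hold; that is, assuming there is no $x\in\partial U$ and $\lambda\in(0,1)$ with $x=\lambda T(x)$, I will produce a genuine fixed point of $T$. The whole argument rests on Schauder's fixed point theorem combined with a Urysohn-type separation, and requires no degree theory. First I would introduce the \emph{solution set}
\[
B=\{\,x\in\overline{U}:\ x=\lambda T(x)\ \text{for some}\ \lambda\in[0,1]\,\}.
\]
This set is nonempty, since $0\in U\subset\overline{U}$ gives $0=0\cdot T(0)\in B$, and it is closed: if $x_n=\lambda_n T(x_n)$ with $x_n\to x$ in $\overline{U}$, then passing to a subsequence along which $\lambda_n\to\lambda\in[0,1]$ and using that complete continuity forces $T(x_n)\to T(x)$, one obtains $x=\lambda T(x)$. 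Complete continuity also shows that $B$ lies in the precompact set $\operatorname{conv}(\{0\}\cup T(\overline{U}))$, so $B$ is in fact compact.

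Next I would verify that, unless a fixed point has already appeared, $B$ and $\partial U$ are disjoint. Suppose $x\in B\cap\partial U$, so $x=\lambda T(x)$ with $\lambda\in[0,1]$. Since $U$ is open in $K$ and $0\in U$, we have $x\neq 0$, hence $\lambda\neq 0$. If $\lambda\in(0,1)$ this is precisely alternative (b), contradicting our assumption; if $\lambda=1$ then $x=T(x)$ is a fixed point and alternative (a) holds outright. Thus we may assume $B$ and $\partial U$ are disjoint closed subsets of the metric space $\overline{U}$, so by Urysohn's lemma there is a continuous $\mu:\overline{U}\to[0,1]$ with $\mu\equiv 1$ on $B$ and $\mu\equiv 0$ on $\partial U$.

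Finally I would define the modified map $T^{\ast}:X\to X$ by
\[
T^{\ast}(x)=
\begin{cases}
\mu(x)\,T(x), & x\in\overline{U},\\
0, & x\in X\setminus\overline{U}.
\end{cases}
\]
Because $\mu$ vanishes on $\partial U$, the two branches agree there and $T^{\ast}$ is continuous on all of $X$; its range lies in $\operatorname{conv}(\{0\}\cup T(\overline{U}))$, which is precompact, so $C:=\overline{\operatorname{conv}}\big(\{0\}\cup\overline{T(\overline{U})}\big)$ is a compact convex set with $T^{\ast}(C)\subset C$. Schauder's fixed point theorem then yields $x^{\ast}\in C$ with $x^{\ast}=T^{\ast}(x^{\ast})$. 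If $x^{\ast}\notin\overline{U}$ then $T^{\ast}(x^{\ast})=0$, forcing $x^{\ast}=0\in\overline{U}$, a contradiction; hence $x^{\ast}\in\overline{U}$ and $x^{\ast}=\mu(x^{\ast})T(x^{\ast})$, so $x^{\ast}\in B$ and therefore $\mu(x^{\ast})=1$, giving $x^{\ast}=T(x^{\ast})$. This is the fixed point required by alternative (a).

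The steps are individually light, so I do not expect a single deep difficulty; rather, the delicate part is the bookkeeping that makes Schauder applicable. Extending $T$ by zero \emph{on all of $X$}, instead of only on $K$, is what lets me work on the clean compact convex domain $C$ and sidesteps any question of whether $K$ is closed, while the convexity of $K$ guarantees the fixed point $x^{\ast}$ lands back in $\overline{U}\subset K$. The other point needing care is the endpoint analysis $\lambda\in\{0,1\}$ in the disjointness step, since the case $\lambda=1$ is exactly the one that may short‑circuit the argument by producing a fixed point directly.
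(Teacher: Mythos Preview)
The paper does not supply a proof of this theorem; it is quoted as a preliminary known result with a citation to Granas--Dugundji, and is then invoked in the proof of Theorem~\ref{teorema alternative neliniara}. Your argument---Urysohn cutoff plus Schauder---is precisely the classical route to the Leray--Schauder alternative and is correct in substance.

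One small technical wrinkle: you extend $T^{\ast}$ by zero to all of $X$ and justify continuity by ``$\mu$ vanishes on $\partial U$''. But $\partial U$ here is the boundary of $U$ \emph{relative to $K$}, whereas the points where your two branches must glue are those on the $X$-boundary of $\overline{U}$, and these sets need not coincide (take $K$ a proper closed subspace of $X$: then every point of $\overline{U}$ lies on the $X$-boundary of $\overline{U}$, yet $\mu$ certainly does not vanish identically on $\overline{U}$). The clean remedy is to define $T^{\ast}$ on $K$ rather than on $X$: the gluing then occurs along the $K$-boundary of $\overline{U}$, which \emph{is} contained in $\partial U$, and continuity on $K$ follows. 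Since $K$ is convex and $T^{\ast}(K)\subset\operatorname{conv}\bigl(\{0\}\cup T(\overline{U})\bigr)\subset K$ is precompact, Schauder applies on $K$ (directly when $K$ is closed---which is the case in the paper's only use of this theorem, where $K$ is a cone; the general convex case requires an additional Dugundji-type extension step). Incidentally, this is where the convexity of $K$ is genuinely used---not, as you suggest, to force $x^{\ast}\in\overline{U}$, which follows simply from $0\in U$.
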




\section{Main results}\label{main results}

\subsection{Existence, localization and multiplicity in conical annular sets}

\label{s1}

Let $\left( X,\lvert \cdot \rvert \right) $ be a Banach space, $K\subset X$
be a nondegenerate cone (also called a wedge), i.e., a closed, convex set such that $\lambda K\subset K$ for all
$\lambda \in \mathbb{R}_{+}$,  and  $%
K\setminus \{0\}\neq \emptyset $. Note that, in particular, one may take $K=X$. For any $\rho >0,$ denote
\begin{equation*}
K_{\rho }:=\left\{ v\in K:~\left\vert v\right\vert =\rho \right\} .
\end{equation*}%
Let
\begin{equation*}
T\colon K\rightarrow K\ \ \ \text{and\ \ \ }\mathcal{F}:K\times K\times K_1\rightarrow
\mathbb{R}
\end{equation*}%
be a completely continuous operator and a continuous functional,
respectively.

For each $v\in K_1,$ we associate to $\mathcal{F}$ and $T$ the \textit{"radial energy functional"} in direction $v,$
\begin{equation*}
\mathcal{E}(v):(0,\infty)\rightarrow \mathbb{R},\ \ \ \mathcal{E}%
(v)(t)=\int_{0}^{t}\mathcal{F}(T(sv),sv,v)ds.
\end{equation*}%
It is straightforward to verify that
\begin{equation*}
    \frac{d}{dt} \mathcal{E}(v)(t) = \mathcal{F}(T(tv), tv, v).
\end{equation*}
Let \( r, R \) be  two real numbers such that \( 0 \leq r < R \leq \infty \), and define
\begin{equation*}
    K_{rR} := \left\{ u \in K \, : \, r \leq |u| \leq R \right\}.
\end{equation*}
Clearly, if \( r = 0 \), then
\begin{equation*}
    K_{0R} = \left\{ u \in K \, : \, |u| \leq R \right\};
\end{equation*}
if \( R = \infty \), then
\begin{equation*}
    K_{r\infty} = \left\{ u \in K \, : \, |u| \geq r \right\};
\end{equation*}
and if \( 0 = r ,\,  R = \infty \), then
\begin{equation*}
    K_{0\infty} = K.
\end{equation*}
Our main assumption on the functional $\mathcal{E}$ is the following:
\begin{description}
\item[(h1)] There exist $r_{0}>0$ and $R_{0}<\infty $ with $r\leq
r_{0}<R_{0}\leq R$ such that, for every \( v \in K_1 \), the mapping \( \mathcal{E}(v) \) attains a unique maximum \( t_v \) on the interval \( [r, R] \), and this maximum satisfies \( r_0 < t_v < R_0 \).
\end{description}
In particular, one may set
\begin{equation*}
r_{0}=r\ \ \text{if }r>0\ \ \ \text{and\ \ \ }R_{0}=R\ \ \text{if }R<\infty .
\end{equation*}

\begin{lemma}
\label{l1}Under the assumption {\normalfont (h1)}, the mapping
\begin{equation*}
v\in K_{1}\mapsto t_{v}
\end{equation*}%
is continuous.
\end{lemma}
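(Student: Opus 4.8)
The plan is to prove continuity sequentially: I would fix $v \in K_1$, take an arbitrary sequence $v_n \to v$ in $K_1$, and show that $t_{v_n} \to t_v$. By (h1) all the points $t_{v_n}$ lie in the compact interval $[r_0, R_0]$, so the sequence $(t_{v_n})$ is bounded; it then suffices to show that $t_v$ is its only subsequential limit and to invoke the usual subsequence criterion.

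The technical core is to establish the joint continuity of $(v,t) \mapsto \mathcal{E}(v)(t)$, namely that $v_n \to v$ and $t_n \to t^*$ in $[0,R_0]$ imply $\mathcal{E}(v_n)(t_n) \to \mathcal{E}(v)(t^*)$. Writing $g_n(s) = \mathcal{F}(T(sv_n), sv_n, v_n)$ and $g(s) = \mathcal{F}(T(sv), sv, v)$, first I would note that $B := \{ sw : s \in [0,R_0],\ w \in \{v\} \cup \{v_n : n \geq 1\} \}$ is compact, being the image of the compact set $[0,R_0] \times (\{v\} \cup \{v_n\})$ under scalar multiplication. Since $T$ is completely continuous, $\overline{T(B)}$ is compact, so all the triples occurring above lie in the compact set $\overline{T(B)} \times B \times (\{v\} \cup \{v_n\}) \subset K \times K \times K_1$, on which the continuous functional $\mathcal{F}$ is bounded, say by $M$. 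This yields the uniform bound $|g_n(s)| \leq M$ on $[0,R_0]$, while $g_n(s) \to g(s)$ pointwise follows from $sv_n \to sv$ together with the continuity of $T$ and of $\mathcal{F}$. Applying dominated convergence to $g_n \mathbf{1}_{[0,t_n]}$ then gives $\mathcal{E}(v_n)(t_n) = \int_0^{t_n} g_n \to \int_0^{t^*} g = \mathcal{E}(v)(t^*)$, which is the desired joint continuity (in particular $\mathcal{E}(v_n)(t) \to \mathcal{E}(v)(t)$ for each fixed $t$).

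With this in hand, let $t^*$ be the limit of an arbitrary convergent subsequence of $(t_{v_n})$, relabelled as the whole sequence, so $t^* \in [r_0,R_0] \subset [r,R]$. From the maximality of $t_{v_n}$ I have $\mathcal{E}(v_n)(t_v) \leq \mathcal{E}(v_n)(t_{v_n})$; passing to the limit via the joint continuity (with fixed upper limit $t_v$ on the left and moving upper limit $t_{v_n} \to t^*$ on the right) gives $\mathcal{E}(v)(t_v) \leq \mathcal{E}(v)(t^*)$. Since $t_v$ is the maximizer of $\mathcal{E}(v)$ on $[r,R]$, the reverse inequality $\mathcal{E}(v)(t^*) \leq \mathcal{E}(v)(t_v)$ also holds, so $t^*$ is itself a maximizer; the uniqueness in (h1) then forces $t^* = t_v$. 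As every convergent subsequence of the bounded sequence $(t_{v_n})$ has limit $t_v$, the whole sequence converges to $t_v$, establishing the continuity of $v \mapsto t_v$.

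The hard part will be the limit passage inside the integral: the moving upper limit $t_n$, and the fact that complete continuity of $T$ supplies compactness (hence the uniform bound and the pointwise convergence of the integrands) rather than, say, uniform continuity of $T$ on bounded sets, are exactly what the dominated-convergence argument is designed to absorb. Once the joint continuity of $\mathcal{E}$ is secured, the uniqueness of the maximum in (h1) makes the remainder routine.
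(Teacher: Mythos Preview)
Your proof is correct and follows essentially the same strategy as the paper's: boundedness of $(t_{v_n})$ from (h1), subsequence criterion, passing to the limit in the maximality inequality, and uniqueness of the maximizer. The only difference is that you justify the limit passage inside the integral explicitly via dominated convergence (using compactness of $B$ and complete continuity of $T$), whereas the paper simply writes ``passing to the limit'' without further comment.
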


\begin{proof}
Let \( v_k \in K_1 \) with \( v_k \to v^0 \) as \( k \to \infty \).
Clearly, \( v^0 \in K_1 \), and by assumption (h1), the sequence \( \left( t_{v_k} \right) \) is bounded.
In order to prove that \( t_{v_k} \to t_{v^0} \), it suffices to show that any convergent subsequence of \( \left (t_{v_k}\right) \) has limit \( t_{v^0} \) (see, e.g., \cite[Lemma 1.1]{herve}).
Thus, let \( \left (t_{v_k}\right)\) be a subsequence converging to some \( t^0 \in [r_0, R_0] \).
Using (h1) we have that 
\begin{equation*}
\mathcal{E}\left( v_{k}\right) \left( t_{v_{k}}\right) \geq \mathcal{E}%
\left( v_{k}\right) \left( t\right) \ \ \ \text{for all\ }t\in \left[ r,R%
\right] ,
\end{equation*}%
that is %
\begin{equation*}
\int_{0}^{t_{v_{k}}}\mathcal{F}(T(sv_{k}),sv_{k},v_k)ds\geq \int_{0}^{t}\mathcal{%
F}(T(sv_{k}),sv_{k},v_k)ds\ \ \left( t\in \left[ r,R\right] \right) .
\end{equation*}%
Passing to the limit we obtain
\begin{equation*}
\int_{0}^{t^{0}}\mathcal{F}\left(T(sv^{0}),sv^{0}, v^0\right)ds\geq \int_{0}^{t}\mathcal{F}%
(T(sv^{0}),sv^{0}, v^0)ds\ \ \left( t\in \left[ r,R\right] \right) ,
\end{equation*}%
so $t^{0}$ is a maximum on $\left[ r,R\right] $ of $\mathcal{E}(v^{0}).$
Again from (h1), we have $t^{0}=t_{v^{0}}.$ Since the convergent subsequence $\left (t_{v_k}\right)$ was arbitrarily chosen, we  infer that the entire sequence $\left( t_{v_{k}}\right) $ converges to $%
t_{v^{0}},$ which proves our statement.
\end{proof}

In what follows we denote
\begin{align*}
&U :=\left\{ tv:\ v\in K_{1},\ 0\leq t<t_{v}\right\}
\end{align*}
and \begin{equation*}
U_{b} :=\left\{ t_{v}v:\ v\in K_{1}\right\} .
\end{equation*}
We call \( U_b \) the \textit{Nehari-type manifold} associated with the functional $\mathcal{F}$, operator \( T \) and the conical annular region \( K_{r,R} \).

\begin{remark}
\label{r0}Under assumption (h1), any point $u\in U_{b}$ satisfies%
\begin{equation*}
r\leq r_{0}<\left\vert u\right\vert <R_{0}\leq R.
\end{equation*}%
Thus, any fixed point of \( T \) that belongs to \( U_b \) lies in the conical annular set \( K_{rR} \). Moreover, assuming that \( 0 < r < R < \infty \), if \( u \in U_b \) is a fixed point of \( T \), then the associated energy in the direction \( v = \frac{u}{|u|} \), that is \( \mathcal{E} \left( \frac{u}{|u|} \right)(t) \), attains its maximum over the interval \( [r, R] \) at the unique point \( t = t_{\frac{u}{|u|}} \).
Therefore, looking for a fixed point of the operator \( T \) within the Nehari-type manifold \( U_b \), we not only obtain a fixed point lying in a conical annular region, but also one that maximizes the energy functional along its own direction.

\end{remark}

\begin{remark}
\label{r1}Under assumption (h1), one has%
\begin{equation*}
\mathcal{F}\left( T\left( u\right) ,u,\frac{u}{|u|}\right) =0\ \ \ \text{for all }u\in
U_{b}.
\end{equation*}%
Indeed, if $u\in U_{b},$ then $u=t_{v}v$ for $v=\frac{u}{|u|}\in K_{1}.$ Also, $t_{v}$
being the maximum of $\mathcal{E}\left( v\right) $ on $\left[ r,R\right] $
located in the open interval $\left( r,R\right) ,$ one has
\begin{equation*}
\frac{d}{dt}\mathcal{E}\left( v\right) \left( t_{v}\right) =\mathcal{F}%
(T(t_{v}v),t_{v}v, v)=0.
\end{equation*}
\end{remark}

\begin{lemma}
\label{l2}Under the assumption {\normalfont (h1)}, the set $U$ is an open bounded subset of $%
K, $ $0\in U,$ and its boundary $\partial U$ relative to $K$ is $U_{b}.$
\end{lemma}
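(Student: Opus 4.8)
The plan is to reduce every assertion to the behaviour of a single continuous scalar function. For $u \in K \setminus \{0\}$ I would write $v_u := u/|u| \in K_1$ and introduce the \emph{gap function}
\[
g(u) := t_{v_u} - |u|,
\]
which is continuous on $K \setminus \{0\}$ because $u \mapsto v_u$ is continuous there and $v \mapsto t_v$ is continuous by Lemma \ref{l1}. The key observation is that a nonzero $u$ lies in $U$ precisely when $g(u) > 0$, since $u = |u|\,v_u$ and, for $v \in K_1$, one has $tv \in U$ iff $t < t_v$. Adding the origin (which lies in $U$ because $0 < t_v$ for every $v$), this yields the working description $U = \{0\} \cup \{u \neq 0 : g(u) > 0\}$.

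From here boundedness and $0 \in U$ are immediate: every $u \in U$ satisfies $|u| < R_0$ by (h1), so $U$ is contained in the relative ball of radius $R_0$. For openness relative to $K$, I would treat the origin and the remaining points separately. Near $0$ the function $g$ is not defined, so I would instead invoke the uniform bound $t_v > r_0$: any $u$ with $0 < |u| < r_0$ satisfies $g(u) > r_0 - |u| > 0$, whence the relative ball of radius $r_0$ about $0$ sits inside $U$ and $0$ is interior. For $u_0 \in U$ with $u_0 \neq 0$, continuity of $g$ and $g(u_0) > 0$ provide a small relative ball around $u_0$, chosen to avoid $0$, on which $g > 0$; this ball lies in $U$.

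For the boundary I would use that, $U$ being open in $K$, its relative boundary is $\partial U = \overline{U} \setminus U$. The inclusion $U_b \subseteq \partial U$ follows by noting that $u = t_v v \in U_b$ has $g(u) = 0$, so $u \notin U$, while the points $(t_v - 1/n)v \in U$ converge to it, so $u \in \overline{U}$. For the reverse inclusion I would take $u \in \partial U$, rule out $u = 0$ (the origin is interior), deduce $g(u) \leq 0$ from $u \notin U$, and obtain $g(u) \geq 0$ by passing to the limit in $g(u_n) > 0$ along a sequence $u_n \in U \setminus \{0\}$ approximating $u$; together these force $g(u) = 0$, i.e. $u = t_{v_u} v_u \in U_b$.

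The hard part will be the careful treatment of the origin: because $g$ is undefined at $0$ yet $0 \in U$, both the openness of $U$ at $0$ and the exclusion of $0$ from $\partial U$ must be argued directly from the uniform lower bound $r_0 < t_v$ supplied by (h1), rather than from the continuity of $g$. Once this single point is dispatched, every remaining step is a routine continuity argument resting on Lemma \ref{l1}.
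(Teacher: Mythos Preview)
Your argument is correct and follows a genuinely different route from the paper. The paper works directly with sequences: it proves openness by showing that the complement $U_c=\{tv: v\in K_1,\ t\ge t_v\}$ is closed, and then identifies $\overline{U}$ with $D=\{tv:0\le t\le t_v,\ v\in K_1\}$ by establishing both inclusions sequentially, reading off $\partial U=\overline{U}\setminus U=U_b$ at the end. You instead package Lemma~\ref{l1} into the single continuous scalar map $g(u)=t_{v_u}-|u|$ on $K\setminus\{0\}$ and reduce every topological question to the sign of $g$: $U\setminus\{0\}=\{g>0\}$, $U_b=\{g=0\}$, openness and the boundary computation become level-set arguments, and only the origin requires a separate treatment via the uniform bound $t_v>r_0$. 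Your approach is more conceptual and slightly shorter; the paper's approach is more explicit and makes the identification of $\overline{U}$ available as a by-product. Both rest on exactly the same ingredients---Lemma~\ref{l1} and the bounds $r_0<t_v<R_0$---so neither is stronger than the other.
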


\begin{proof}
   Clearly, by definition, we have \( 0 \in U \).

   \textit{(i) Boundedness.} Let \( u \in U \). Then \( u = t v \), where \( v = \frac{u}{|u|} \in K_1 \) and \( t < t_v \). Thus, by assumption~(h1), we have
\begin{equation*}
    |u| \leq t < t_v < R_0,
\end{equation*}
and consequently,
\[
U \subset K_{R_0},
\]
which shows that \( U \) is bounded.

\textit{(ii) Openness.} To prove that $U$ is open we show that its
complement
\begin{equation*}
U_{c}:=\left\{ tv:\ v\in K_{1},\ t\geq t_{v}\right\}
\end{equation*}%
is closed. Let \( u_k \in U_c \) be a sequence such that
\begin{equation*}
    u_k \to u \in K.
\end{equation*}
We  show that \( u \in U_c \). By the definition of \( U_c \), we have\begin{equation*}
    u_k = t_k v_k,
\end{equation*} where \( t_k \geq t_{v_k} \) and \( |v_k| = 1 \).   From  the convergence $t_{k}v_{k}\rightarrow u$, it follows that
that $t_{k}\rightarrow \left\vert u\right\vert .$ Since \begin{equation}\label{ineq t_k}
    t_{k}\geq
t_{v_{k}}\geq r_{0}>0,
\end{equation}one has $\left\vert u\right\vert \geq r_{0},$ so $%
u\in K\setminus \left\{ 0\right\} .$ 

Next, observe that $v_{k}\rightarrow \frac{1}{%
\left\vert u\right\vert }u=:v^{0}$ and based on Lemma \ref{l1}, it follows that $%
t_{v_{k}}\rightarrow t_{v^{0}}.$ Passing to the limit in \eqref{ineq t_k}, we deduce that $\left\vert u\right\vert \geq t_{v^{0}}.$ Thus
\begin{equation*}
u=\left\vert u\right\vert v^{0}\geq t_{v^{0}}v^{0}
\end{equation*}%
where $v^{0}\in K_{1}.$ This shows that $u\in U_{c}$ as desired, and  therefore $U_{c}$
is closed.

\textit{(iii) $\partial U=U_b$.}
In order to prove that $\partial U=U_{b},$ we need to show that
\begin{equation}
\overline{U}=D:=\left\{ tv:\ 0\leq t\leq t_{v},\ v\in K_{1}\right\} .
\label{rp2}
\end{equation}%
Let $u\in \overline{U}$ be any point. Then $u$ is the limit of a sequence  $u_k\in U$, where 
 $u_k=t_{k}v_{k}$ with
\begin{equation*}
v_{k}\in K_{1},\ \ 0\leq t_{k}<t_{v_{k}}\ \ \text{and\ \ }%
t_{k}v_{k}\rightarrow u.
\end{equation*}%
Taking to a subsequence we may assume that $t_{k}\rightarrow t^{0}\in \left[
0,R_{0}\right] .$ If $t^{0}=0,$ then $u=0\in D.$ Otherwise, if $t^{0}>0,$ we
deduce that $v_{k}\rightarrow \frac{1}{t^{0}}u=:v^{0}$ and $v^{0}\in K_{1}.$
Also, based on Lemma \ref{l1}, one has $t_{v_{k}}\rightarrow t_{v^{0}},$ and
from $t_{k}<t_{v_{k}}$ we deduce that $t^{0}\leq t_{v^{0}}.$ Hence $%
u=t^{0}v^{0},$ where $0\leq t^{0}\leq t_{v^{0}}$ and $v^{0}\in K_{1},$ that
is $u\in D.$ Thus $\overline{U}\subset D.$

For the converse inclusion, take any $u\in D.$ Then
\begin{equation*}
u=tv\ \ \ \text{ for some\ \ }v\in K_{1}\ \ \text{and\ }t\ \text{with\ \ }%
0\leq t\leq t_{v}.
\end{equation*}%
If $t=0,$ then $u=0\in \overline{U}.$ For $t>0,$ we may choose an increasing
sequence $\left( t_{k}\right) $ with $0<t_{k}<t\leq t_{v}$ $\ $and $%
t_{k}\rightarrow t.$ Clearly $u_{k}:=t_{k}v\in U$ and $u_{k}\rightarrow
tv=u. $ Hence $u\in \overline{U}.$ Therefore $D\subset \overline{U}$
finishing the proof of (\ref{rp2}).

Finally, from the representations of $\overline{U}$ and $U,$ we see that $%
\partial U=\overline{U}\setminus U=U_{b}.$
\end{proof}

We now proceed to state the main result of the paper, a theorem of existence
and localization of a fixed point in a conical annular region. Consider the
following set of conditions:

\begin{description}
\item[(h2)] One has
\begin{equation*}
\inf_{u\in U_{b}}\left\vert T(u)\right\vert >0.
\end{equation*}

\item[(h3)] The functional $\mathcal{F}$ is such that if
\begin{equation*}
\mathcal{F}\left(t u,u,\frac{u}{|u|}\right)=0\quad \text{for some }t >0\text{ and }u\in K\setminus \{0\},
\end{equation*}%
then $t=1$.

\item[(h4)] The functional $\mathcal{F}$ is such that if
\begin{equation*}
\mathcal{F}\left(t u,u,\frac{u}{|u|}\right)=0\quad \text{for some }t >0\text{ and }u\in K\setminus \{0\},
\end{equation*}%
then $t\geq 1$.

\item[(h5)] The functional $\mathcal{F}$ is such that if
\begin{equation*}
\mathcal{F}\left(t u,u,\frac{u}{|u|}\right)=0\quad \text{for some }t >0\text{ and }u\in K\setminus \{0\},
\end{equation*}%
then $t\leq 1$.

\item[(h6)] The operator $T$ is compressive on $U$, i.e.,
\begin{equation*}
\left\vert Tu\right\vert \leq \left\vert u\right\vert \ \ \ \text{for all }%
u\in U_{b}.
\end{equation*}

\item[(h7)] The operator $T$ is expansive on $U$, i.e.,
\begin{equation*}
\left\vert Tu\right\vert \geq \left\vert u\right\vert \ \ \ \text{for all }%
u\in U_{b}.
\end{equation*}
\end{description}

\begin{theorem}
\label{t1}Let condition {\normalfont (h1)} holds. If either the set of conditions  {\normalfont (h2)} and
 {\normalfont (h3)}; or the set  {\normalfont (h2)},  {\normalfont (h4)} and  {\normalfont (h6)}; or the set  {\normalfont (h2)},  {\normalfont (h5)} and  {\normalfont (h7)} is
satisfied, then $T$ has a fixed point $u$ in $U_{b}.$ In addition, $u$
maximizes the radial energy functional $\mathcal{E}(v)$ on the interval $\left(
r,R\right) ,$ along its own direction $v=\frac{1}{\left\vert u\right\vert }%
u. $
\end{theorem}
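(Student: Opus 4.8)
The plan is to apply the Krasnoselskii--Ladyzenskii invariant-direction theorem (Theorem~\ref{tkl}) to produce, on the relative boundary $U_{b}=\partial U$, a pair $(\lambda_{0},x_{0})$ with $\lambda_{0}>0$, $x_{0}\in U_{b}$ and $x_{0}=\lambda_{0}T(x_{0})$, and then to show that in each of the three scenarios the geometry encoded in conditions (h3)--(h7) forces $\lambda_{0}=1$, thereby turning the invariant direction into a genuine fixed point.

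First I would check the hypotheses of Theorem~\ref{tkl} for the set $U$. By Lemma~\ref{l2}, $U$ is open and bounded (relative to $K$), contains $0$, and has relative boundary $\partial U=U_{b}$; since $T\colon K\to K$ is completely continuous and $\overline{U}\subset K$ (as $K$ is closed and $U\subset K$), its restriction to $\overline{U}=K\cap\overline{U}$ is completely continuous as well. Condition (h2) is exactly $\inf_{x\in U_{b}}|T(x)|>0$, i.e. $\inf_{x\in K\cap\partial U}|T(x)|>0$. Hence Theorem~\ref{tkl} yields $\lambda_{0}>0$ and $x_{0}\in U_{b}$ with
\begin{equation*}
x_{0}=\lambda_{0}T(x_{0}),\qquad\text{equivalently}\qquad T(x_{0})=\tfrac{1}{\lambda_{0}}x_{0}.
\end{equation*}

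The crux is to identify $\lambda_{0}$. Writing $u:=x_{0}\in U_{b}\subset K\setminus\{0\}$ and $v:=u/|u|$, Remark~\ref{r1} gives $\mathcal{F}(T(u),u,v)=0$; substituting $T(u)=\lambda_{0}^{-1}u$ produces
\begin{equation*}
\mathcal{F}\!\left(\tfrac{1}{\lambda_{0}}\,u,\,u,\,\tfrac{u}{|u|}\right)=0,
\end{equation*}
which is precisely the premise of (h3)--(h5) with $t=\lambda_{0}^{-1}>0$. If (h3) holds, then immediately $\lambda_{0}^{-1}=1$, so $\lambda_{0}=1$. If instead (h4) holds, it gives $\lambda_{0}^{-1}\geq 1$, that is $\lambda_{0}\leq 1$; but from $x_{0}=\lambda_{0}T(x_{0})$ one has $|T(u)|=|u|/\lambda_{0}$, and the compressivity (h6), namely $|T(u)|\leq|u|$, then forces $\lambda_{0}\geq 1$, whence $\lambda_{0}=1$. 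Symmetrically, under (h5) we obtain $\lambda_{0}\geq 1$, while the expansivity (h7), namely $|T(u)|\geq|u|$, gives $\lambda_{0}\leq 1$, again yielding $\lambda_{0}=1$. In every case $T(x_{0})=x_{0}$, so $x_{0}\in U_{b}$ is a fixed point of $T$.

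Finally, the maximization claim is immediate from the construction: since $x_{0}\in U_{b}$, by definition $x_{0}=t_{v}v$ with $v=x_{0}/|x_{0}|\in K_{1}$ and $|x_{0}|=t_{v}$, and by (h1) (see also Remark~\ref{r0}) the value $t_{v}$ is the unique maximum of $\mathcal{E}(v)$ on $[r,R]$, lying in $(r_{0},R_{0})\subset(r,R)$. I expect the main obstacle, and the conceptual novelty, to be the transfer step rather than any technical estimate: recognizing that the nonlinear eigenvalue relation $x_{0}=\lambda_{0}T(x_{0})$ supplied by Theorem~\ref{tkl} can be fed into the vanishing identity of Remark~\ref{r1} so that the purely scalar conditions (h3)--(h5)---supplemented when necessary by the norm inequalities (h6)--(h7)---pin $\lambda_{0}$ to $1$. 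Everything else reduces to the already-established topological description of $U$ and $U_{b}$ in Lemmas~\ref{l1} and~\ref{l2}.
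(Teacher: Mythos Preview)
Your argument is correct and matches the paper's proof essentially line for line: apply Theorem~\ref{tkl} on $U$ (whose topological properties come from Lemma~\ref{l2}), feed the resulting eigenvalue relation into the vanishing identity of Remark~\ref{r1}, and use (h3), or (h4)+(h6), or (h5)+(h7), to force the eigenvalue to equal $1$. The only cosmetic difference is that the paper writes the Birkhoff--Kellogg output as $T(u)=tu$ (so its $t$ is your $\lambda_{0}^{-1}$), which makes the invocation of (h3)--(h5) slightly more direct but changes nothing of substance.
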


\begin{proof}
(a) Let conditions (h2) and (h3) hold. Hypothesis (h2) makes applicable the
Birkhoff-Kellogg theorem for the operator $T$ on the set $U.$ Hence, it is
guaranteed the existence of a number $t >0$ and $u\in \partial U=U_{b}$ (from
Lemma \ref{l2}) such that $T(u)=t u.$ Since $u\in U_{b}$, using Remarks \ref%
{r1}, we have
\begin{equation}
0=\mathcal{F}\left(T(u),u,\frac{u}{|u|}\right)=\mathcal{F}\left(t u ,u,\frac{u}{|u|}\right),  \label{rp1}
\end{equation}%
so by assumption (h3), $t=1$, i.e., $u$ is a fixed point for $T$.

(b) Let conditions (h2), (h4) and (h6) hold. 
As above, there exist \( t > 0 \) and \( u \in U_b \) such that \( T(u) = t u \), and relation~\eqref{rp1} holds. Then, by assumption~(h4), we have \( t \geq 1 \). If \( t > 1 \), then
\[
|T(u)| = t |u| > |u|,
\]
which contradicts assumption~(h6). Hence, \( t = 1 \), and therefore \( u \) is a fixed point of \( T \).

(c) Let conditions (h2), (h5) and (h7) hold. As in step~(b), there exist \( t > 0 \) and \( u \in U_b \) such that \( T(u) = t u \), and relation~\eqref{rp1} holds. Now, from assumption~(h5), we have \( t \leq 1 \). If \( t < 1 \), then
\[
|T(u)| = t |u| < |u|,
\]
which contradicts assumption~(h7). Hence, \( t = 1 \), and thus \( T(u) = u \).
\end{proof}

Using the nonlinear alternative instead of the Birkhoff-Kellogg theorem, we obtain the following existence result.
\begin{theorem}\label{teorema alternative neliniara}
    Assume conditions {\normalfont (h1)} and {\normalfont (h5)} hold true. Then the operator $T$ has a fixed point in $u\in \overline{U}.$
\end{theorem}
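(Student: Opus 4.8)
The plan is to invoke the nonlinear alternative (Theorem~\ref{tna}) for the operator $T$ on the set $U$, and then to discard its second alternative by combining the boundary identity of Remark~\ref{r1} with the one-sided scaling constraint (h5). Since (h1) is in force, all the structural facts about $U$ established earlier are available, and the whole argument reduces to a short contradiction on a scalar factor.

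First I would verify that the hypotheses of Theorem~\ref{tna} are genuinely met in our setting. The cone $K$ is convex by definition, and Lemma~\ref{l2} guarantees, under (h1), that $U$ is an open bounded subset of $K$ with $0\in U$ and with boundary (relative to $K$) equal to $U_{b}$. As $T\colon K\to K$ is completely continuous and $\overline{U}\subset K$, the restriction $T\colon\overline{U}\to K$ is completely continuous as well. Theorem~\ref{tna} then yields the dichotomy: either $T$ has a fixed point in $\overline{U}$, which is precisely the desired conclusion, or there exist $x\in\partial U=U_{b}$ and $\lambda\in(0,1)$ such that $x=\lambda T(x)$.

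The heart of the argument is to eliminate the second case. Assume it holds; then, writing $t:=1/\lambda>1$, we have $T(x)=t\,x$ with $x\in U_{b}\subset K\setminus\{0\}$ (the inclusion following from Remark~\ref{r0}). Because $x\in U_{b}$, Remark~\ref{r1} gives $\mathcal{F}\bigl(T(x),x,\tfrac{x}{|x|}\bigr)=0$, and substituting $T(x)=t\,x$ produces $\mathcal{F}\bigl(t\,x,x,\tfrac{x}{|x|}\bigr)=0$ for some $t>0$ and $x\in K\setminus\{0\}$. Condition (h5) then forces $t\le 1$, contradicting $t>1$. Hence the second alternative cannot occur, and $T$ must possess a fixed point in $\overline{U}$.

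I expect the only delicate point to be the clean matching of the abstract hypotheses of the nonlinear alternative with the geometry of $U$ — in particular that $U$ is open \emph{relative to} $K$ with boundary exactly $U_{b}$ — but this has already been settled in Lemma~\ref{l2}, so no real obstacle remains. The conceptual content lies entirely in Remark~\ref{r1}: every point of $\partial U$ automatically satisfies the radial equation $\mathcal{F}(T(u),u,u/|u|)=0$, which is what allows (h5) to act on the scalar $t=1/\lambda$ and rule out the escaping alternative.
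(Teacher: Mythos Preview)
Your proof is correct and follows essentially the same route as the paper: apply the nonlinear alternative (Theorem~\ref{tna}) on $U$, then eliminate the second alternative by combining the boundary identity $\mathcal{F}(T(u),u,u/|u|)=0$ from Remark~\ref{r1} with condition~(h5) to reach the contradiction $t\le 1<t$. The only cosmetic difference is that you explicitly verify the hypotheses of Theorem~\ref{tna} via Lemma~\ref{l2} and pass through the substitution $t=1/\lambda$, whereas the paper states the second alternative directly in the form $T(u)=tu$ with $t>1$.
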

\begin{proof}
    From the nonlinear
alternative we have that either
\begin{description}
\item[(a)] $T$ has a fixed point in $\overline{U},$
\end{description}

or

\begin{description}
\item[(b)] $T(u)=tu$ for some $u\in U_{b}$ and $t>1$.
\end{description}
The second case of the alternative does not occur, since otherwise we would
have
\begin{equation*}
0=\mathcal{F}\left( T\left( u\right) ,u,\frac{u}{|u|}\right)=\mathcal{F}\left( tu ,u,\frac{u}{|u|}\right),
\end{equation*}%
whence by (h5), we obtain $t\leq 1,$ which is a contradiction. 
\end{proof}
\begin{remark}
    Clearly, the
fixed point $u$ given by Theorem \ref{teorema alternative neliniara} can be the origin. However, if $T\left( 0\right) \neq 0,$
then $u$ is not trivial.
\end{remark}

\begin{remark}
\label{r3}All the previous results remain valid if, in condition~(h1), the maximum is replaced by the minimum, that is, the following condition holds:
\begin{description}
\item[(h1$^\ast$)] There exist $r_{0}>0$ and $R_{0}<\infty $ with $r\leq
r_{0}<R_{0}\leq R$ such that for each $v\in K_{1}$, the mapping $\mathcal{E}%
(v)$ has a unique minimum $t_{v}$ on $\left[ r,R\right] $ and $%
r_{0}<t_{v}<R_{0}.$
\end{description}
Indeed, (h1$^\ast$) implies (h1) for $-\mathcal{F},$ while all the other
conditions (h2)-(h7) are not affected by this change of sign.
\end{remark}

Noting that the definition of the Nehari-type manifold $U_{b}$ and all the
associated conditions are given with respect to a fixed pair $(r,R),$ we can
expect to repeat them for different such pairs, thereby obtaining a finite
or infinite number of fixed points of the operator $T$ in the cone $K.$ This
can happen if the mapping $\ \mathcal{E}$ has oscillations. Thus, we
have
\begin{theorem}
\label{t1 copy(1)}

\begin{description}
\item[(1$^{0}$)] If there are pairs of numbers $\left( r_{i},R_{i}\right) ,$
$i=1,2,...,n\ $ such that
\begin{equation*}
r_{i}<R_{i}\leq r_{i+1}\ \ {\ \text for\ } i=1,2,...,n-1
\end{equation*}
and Theorem \ref{t1} applies to each of these pairs, then the operator $T$
has $n $ fixed points $u_{i}\in K$ with%
\begin{equation*}
r_{i}<\left\vert u_{i}\right\vert <R_{i},\ \ \ i=1,2,...,n.
\end{equation*}

\item[(2$^{0}$)] If there is a sequence of pairs $\left( r_{i},R_{i}\right)
,\ i=1,2,...\ $ such that
\begin{equation*}
r_{i}<R_{i}\leq r_{i+1}\ \ {\ \text for\ } i=1,2,...
\end{equation*}
and Theorem \ref{t1} applies to each of these pairs, then the operator $T$
has a sequence of fixed points $u_{i}\in K$ with%
\begin{equation*}
r_{i}<\left\vert u_{i}\right\vert <R_{i},\ \ \ i=1,2,....
\end{equation*}

\item[(3$^{0}$)] If there is a sequence of pairs $\left( r_{i},R_{i}\right)
,\ i=1,2,...\ $ such that
\begin{equation*}
r_{i+1}<R_{i+1}\leq r_{i}\ \ {\ \text for\ } i=1,2,...
\end{equation*}
and Theorem \ref{t1} applies to each of these pairs, then the operator $T$
has a sequence of fixed points $u_{i}\in K$ with%
\begin{equation*}
r_{i}<\left\vert u_{i}\right\vert <R_{i},\ \ \ i=1,2,....
\end{equation*}
\end{description}
\end{theorem}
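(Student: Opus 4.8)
The three parts share a single mechanism, so I would treat them uniformly. The plan is as follows. For each index $i$, the hypothesis that Theorem \ref{t1} applies to the pair $(r_i,R_i)$ immediately yields a fixed point $u_i$ of $T$ lying in the Nehari-type manifold $U_b$ associated with $(r_i,R_i)$. By the localization recorded in Remark \ref{r0}, every point of that manifold has norm strictly between the associated $r_0$ and $R_0$, so in particular
\begin{equation*}
r_i < |u_i| < R_i .
\end{equation*}
Thus each application of Theorem \ref{t1} contributes a fixed point whose norm is trapped in the open interval $(r_i,R_i)$.

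It then remains only to check that the fixed points so produced are pairwise distinct, and here the ordering assumptions on the pairs do all the work. In parts (1$^{0}$) and (2$^{0}$), the condition $r_i<R_i\le r_{i+1}$ makes the open intervals $(r_i,R_i)$ pairwise disjoint: for $i<j$ one has
\begin{equation*}
|u_i| < R_i \le r_{i+1} \le \dots \le r_j < |u_j| ,
\end{equation*}
so $|u_i|\neq|u_j|$ and therefore $u_i\neq u_j$. In part (3$^{0}$) the intervals are stacked in the opposite order, $r_{i+1}<R_{i+1}\le r_i$; for $i<j$ the analogous chain
\begin{equation*}
|u_j| < R_j \le r_{j-1} \le \dots \le r_i < |u_i|
\end{equation*}
again forces distinct norms. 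In every case the fixed points have pairwise distinct norms, hence are genuinely different elements of $K$, and we obtain $n$ fixed points (respectively, a sequence of fixed points) localized in the prescribed annular regions $r_i<|u_i|<R_i$.

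I expect no real obstacle in this argument: the entire analytic content has already been absorbed into Theorem \ref{t1}, which supplies both the existence and the sharp norm-localization of each $u_i$. The only point requiring care is the elementary bookkeeping that converts the interlacing conditions on the pairs $(r_i,R_i)$ into disjointness of the localization intervals; once that is observed, distinctness of the fixed points is immediate. Accordingly, I would present parts (1$^{0}$) and (2$^{0}$) together, since the finite and countable cases differ only in the range of the index, and remark that (3$^{0}$) is identical up to reversing the orientation of the indexing.
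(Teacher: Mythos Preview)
Your proposal is correct and follows exactly the approach the paper indicates: the paper does not give an explicit proof of this theorem, merely noting beforehand that one repeats the construction of Theorem \ref{t1} on each pair $(r_i,R_i)$, and your argument spells this out, adding the straightforward observation that the interlacing conditions force the localization intervals to be pairwise disjoint so that the resulting fixed points are distinct.
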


\subsection{Recover the classical variational framework}

\noindent
In Section~\ref{s1}, a Nehari-type method has been introduced in a nonvariational framework, starting from a fixed point problem
\begin{equation*}
T\left( u\right) =u
\end{equation*}%
associated with an operator $T.$ In this respect, an energy-type function and a Nehari-type manifold have been defined. The natural question is whether we can recover the classical Nehari method within the variational framework.

To give an answer, let $X^{\ast }$ be the dual of $X$, $\langle \cdot ,\cdot
\rangle $ denote the duality between $X^{\ast }$ and $X,$ and let the norms
on $X$ and $X^{\ast }$ be denoted by the same symbol $\left\vert \cdot
\right\vert .$ We shall denote by $J$ the \textit{duality mapping }%
corresponding to a normalization function $\theta ,$ i.e. the set-valued
operator $J:X\rightarrow {\mathcal{P}}(X^{\ast })$ defined by
\begin{equation*}
Jx=\big\{x^{\ast }\in X^{\ast }:\ \langle x^{\ast },x\rangle =\theta \left(
\vert x\vert \right) \left\vert x\right\vert ,\ \vert
x^{\ast }\vert =\theta \left( \vert x\vert \right)
\big\},\quad x\in X.
\end{equation*}%
Recall that by a normalization function we mean a continuous strictly
increasing function $\theta :%
\mathbb{R}
_{+}\rightarrow
\mathbb{R}
_{+}$ with $\theta \left( 0\right) =0$ and lim$_{t\rightarrow \infty
}\theta \left( t\right) =\infty .$ We assume that $\theta \left( 1\right)
=1.$ Obviously, one has%
\begin{equation*}
J\left( \lambda x\right) =\theta \left( \lambda \right) J(x)
\end{equation*}%
for every $x\in X$ and $\lambda \in
\mathbb{R}
.$ We assume that $J$ is single-valued, bijective, and that both $J$ and its
inverse $J^{-1}$ are continuous.

Furthermore, assume that \( E \in C^1(X) \) is a functional satisfying, without loss of generality, \( E(0) = 0 \). We can easily see that for any $t>0$ and $u\in X$, one has\begin{equation*}
    E(tu)=E(tu)-E(0)=\int_0^1 \langle E'( \theta tu), \,tu\rangle d\theta =\int_0^t \langle E'( s u), \,u\rangle ds.
\end{equation*}
Our  goal is to obtain critical point of $E,$ that is to solve the equation
\begin{equation*}
E^{\prime }\left( u\right) =0.
\end{equation*}%
Take $K=X$. We consider the operator $T:K\rightarrow K$ defined by%
\begin{equation*}
T\left( u\right) :=J^{-1}\left( Ju-E^{\prime }\left( u\right) \right).
\end{equation*}%
Obviously, the critical points of the functional $E$ are the fixed points of
the operator $T.$ 

Let us consider   the   functional $\mathcal{F}:K\times K\times
K_1\rightarrow
\mathbb{R}
$ (recall that $K_1=\{u\in K\, : \, |x|=1\}$),
\begin{equation*}
\mathcal{F}\left( u,v,w\right) :=\left\langle Jv-Ju,w\right\rangle.
\end{equation*}%
Then, 
\begin{align*}
\mathcal{E}\left( v\right) \left( t\right) &:=\int_{0}^{t}\mathcal{F}\left(
T\left( sv\right) ,sv,v\right) ds\\&=\int_{0}^{t}\left\langle J\left( sv\right)
-JT\left( sv\right) ,v\right\rangle ds\\&=\int_{0}^{t}\left\langle E^{\prime
}\left( sv\right) ,v\right\rangle ds\\& =E(tv).
\end{align*}%
From this, we observe that the classical energy functional $E$ is recovered through $\mathcal{E}$. Consequently, the classical Nehari manifold method (see \cite{nehari}) is also retrieved; both in terms of the maximized mapping 
$$
t \mapsto E(tu),
$$
and in the   classical Nehari manifold $\mathcal{N}$ given by
$$
\mathcal{N} = \left\{ u \in X : \langle E'(u), u \rangle = 0 \right\}.
$$

\begin{corollary}\label{corolar}
Under the previous conditions, if in addition
\begin{equation*}
J^{-1}\left( Ju-E^{\prime }\left( u\right) \right) (K)\subset K,
\end{equation*}%
then $E$ has a critical point $u$ in the Nehari manifold $U_{b},$ with%
\begin{equation*}
u\in K,\ \ r<\left\vert u\right\vert <R.
\end{equation*}%
In addition, $u$ maximizes the radial energy functional
\begin{equation*}
t\mapsto \mathcal{E}(v)\left( t\right) =E(tv),
\end{equation*}%
on the interval $\left( r,R\right) ,$ along its own direction $v=\frac{1}{%
\left\vert u\right\vert }u.$
\end{corollary}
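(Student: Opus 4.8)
The plan is to reduce the corollary to case (a) of Theorem~\ref{t1} (conditions (h2) and (h3)), exploiting the identity $\mathcal{E}(v)(t)=E(tv)$ already established above. Since the critical points of $E$ coincide with the fixed points of $T=J^{-1}(J\cdot-E')$, and the extra hypothesis $T(K)\subset K$ makes $T$ a self-map of the cone $K$ (completely continuous, as in the standing framework of Section~\ref{s1}), it suffices to produce a fixed point of $T$ on the Nehari-type manifold $U_b$. After the identification $\mathcal{E}(v)(t)=E(tv)$, assumption (h1) is precisely the classical geometric Nehari requirement that each ray $t\mapsto E(tv)$ have a unique maximum $t_v\in(r_0,R_0)$ on $[r,R]$, which we take as the standing hypothesis; thus only (h2) and (h3) remain to be verified.

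First I would check (h3) by a direct computation with the explicit form of $\mathcal{F}$. For $t>0$ and $u\in K\setminus\{0\}$,
\begin{equation*}
\mathcal{F}\!\left(tu,u,\tfrac{u}{|u|}\right)=\left\langle Ju-J(tu),\tfrac{u}{|u|}\right\rangle=(1-\theta(t))\left\langle Ju,\tfrac{u}{|u|}\right\rangle=(1-\theta(t))\,\theta(|u|),
\end{equation*}
using $J(tu)=\theta(t)Ju$ and $\langle Ju,u\rangle=\theta(|u|)|u|$. As $u\neq0$ forces $\theta(|u|)>0$, this quantity vanishes exactly when $\theta(t)=1$, i.e.\ when $t=1$, since $\theta$ is strictly increasing with $\theta(1)=1$. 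Hence (h3) holds; this clean algebraic identity is what makes the variational $\mathcal{F}$ automatically satisfy the strongest of the Theorem~\ref{t1} hypotheses.

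Next I would establish (h2). Fix $u\in U_b$; by Remark~\ref{r1} we have $\mathcal{F}(T(u),u,u/|u|)=0$, that is $\langle JT(u),u/|u|\rangle=\langle Ju,u/|u|\rangle=\theta(|u|)$. Since $u/|u|$ has unit norm and $|JT(u)|=\theta(|T(u)|)$, the defining inequality of the duality pairing together with the localization $|u|>r_0$ from Remark~\ref{r0} gives
\begin{equation*}
\theta(|T(u)|)=|JT(u)|\ge\left\langle JT(u),\tfrac{u}{|u|}\right\rangle=\theta(|u|)>\theta(r_0)>0.
\end{equation*}
As $\theta$ is strictly increasing, $|T(u)|>r_0$, so $\inf_{u\in U_b}|T(u)|\ge r_0>0$ and (h2) holds. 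Case (a) of Theorem~\ref{t1} then applies and yields a fixed point $u\in U_b$ of $T$, i.e.\ a critical point of $E$; the localization $r<|u|<R$ and the fact that $u$ maximizes $t\mapsto\mathcal{E}(v)(t)=E(tv)$ on $(r,R)$ along $v=u/|u|$ follow from Remark~\ref{r0} and the identity $\mathcal{E}(v)(t)=E(tv)$.

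The only genuinely delicate point is the verification of (h2): it is not a formal consequence of (h1), but relies on feeding the Nehari relation of Remark~\ref{r1} into the defining identities of the duality mapping, and on the strict positivity $|u|>r_0$ that the localization in (h1) provides. Everything else is bookkeeping, so I expect no serious obstacle beyond keeping track of the standing complete-continuity assumption on $T$ that is inherited from the abstract framework.
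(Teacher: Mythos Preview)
Your proof is correct and follows essentially the same route as the paper: you verify (h2) and (h3) and then invoke case~(a) of Theorem~\ref{t1}. The only cosmetic differences are that the paper checks (h2) first and phrases the key identity as $\langle E'(u),u\rangle=0$ rather than citing Remark~\ref{r1} directly, and it concludes $|T(u)|\ge|u|\ge r_0$ instead of your (equally valid) strict version $|T(u)|>r_0$.
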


\begin{proof}
We only need to check conditions (h2) and (h3).

\textit{Check of (h2)}: Let $u\in U_{b}$ and denote
\begin{equation*}
w:=T\left( u\right) =J^{-1}\left( Ju-E^{\prime }\left( u\right) \right) .
\end{equation*}
Then $\left\vert u\right\vert \geq r_{0}$ and
\begin{equation*}
Jw=Ju-E^{\prime }\left( u\right),
\end{equation*}%
whence, since $\left\langle E^{\prime }\left( u\right) ,u\right\rangle =0,$
one has
\begin{equation*}
\left\langle Jw,u\right\rangle =\left\langle Ju-E'(u),u\right\rangle=\left\langle Ju,u\right\rangle =\theta
\left( \left\vert u\right\vert \right) \left\vert u\right\vert \leq
\left\vert Jw\right\vert \left\vert u\right\vert =\theta \left( \left\vert
w\right\vert \right) \left\vert u\right\vert .
\end{equation*}%
From this, we have \( \theta(|w|) \geq \theta(|u|) \). Thus, by the monotonicity of the function \( \theta \), it follows that
\[
|w| \geq |u| \geq r_0.
\]
Therefore,
\begin{equation*}
\inf_{u\in U_{b}}\left\vert T\left( u\right) \right\vert \geq r_{0}>0.
\end{equation*}

\textit{Check of (h3)}: Assume that 
\begin{equation*}
    \mathcal{F}\left(t u,u,\frac{u}{|u|}\right)=0,
\end{equation*}
for some $t>0$ and $%
u\in K\setminus \{0\}.$ Then%
\begin{equation*}
0=\left\langle Ju-J\left( tu\right) ,\frac{u}{|u|}\right\rangle,
\end{equation*}%
which is equivalent to \begin{equation*}
   0 =\left\langle Ju-\varphi
\left( t\right) Ju,u\right\rangle =\left( 1-\varphi \left( t\right) \right)
\varphi \left( \left\vert u\right\vert \right) \left\vert u\right\vert .
\end{equation*}
Hence, \( \varphi(t) = 1 \), and therefore \( t = 1 \), according to our assumption that \( \varphi(1) = 1 \).

Thus Theorem \ref{t1} applies and gives the result.
\end{proof}

\section{Applications}
\subsection{A nonvariational Dirichlet problem with $p$-Laplacian}
To illustrate the theoretical results, we first consider the following nonvariational  Dirichlet problem for
a $p$-Laplace equation
\begin{equation}
\begin{cases}
-\left( \left\vert u^{\prime }\right\vert ^{p-2}u^{\prime }\right) ^{\prime
}(t)=f(u(t), u'(t)),\quad t\in \left( 0,1\right),  \\
u(t)\geq 0 \\
u(0)=u(1)=0,%
\end{cases}
\label{pa}
\end{equation}%
where $1<p<\infty $, and $f:\mathbb{R}_+\times \mathbb{R}\rightarrow \mathbb{R}_+$ is a 
continuous function.

Consider the Banach space $X:=W_{0}^{1,p}\left( 0,1\right) $ endowed with
the usual norm $|u|_{1,p}:=|\nabla u|_{p}$, where \begin{equation*}
    |w|_p=\left(\int_0^1 |w|^pdx\right)^{\frac{1}{p}}\quad (w\in L^p(0,1)).
\end{equation*}It is well known (see, e.g.,
\cite[Chapter~1.2]{djm}) that $W_{0}^{1,p}(0,1)$ is a uniformly convex and
reflexive Banach space with $W_{0}^{-1,q}(0,1)$ its dual, where $%
q$ is the conjugate of $p$, i.e., $\frac{1}{p}+\frac{1}{q%
}=1$. Let $\langle \cdot ,\cdot \rangle $ denote the duality pairing between
$W_{0}^{-1,q}(0,1)$ and $W_{0}^{1,p}(0,1)$.  Then, for any $v\in
L^{q}(0,1)\subset W_{0}^{-1,q}(0,1)$, we have
\begin{equation*}
\langle v,u\rangle =\int_{0}^{1}v(t)u(t)dt,
\end{equation*}%
for all $u\in W_{0}^{1,p}(0,1)$ (see, e.g., \cite[Proposition~8.14]{brezis}).

In what follows, \( J \colon W^{1,p}_0(0,1) \to W^{-1,q}_0(0,1) \) denotes the duality mapping of \( W^{1,p}_0(0,1) \) corresponding to the normalization function \( \theta(t)=t^{p-1} \) (\( t \geq 0 \)).  
It is known that
\[
    Ju = -\left( |u'|^{p-2} u' \right)', \quad u \in W^{1,p}_0(0,1),
\]
and that \( J \) is bijective and continuous, while its inverse \( J^{-1} \) is strongly monotone, bounded (maps bounded sets into bounded sets), and continuous (see, e.g., \cite[Theorem~8]{djm}).

Let $\lambda _{p}$ denote the first eigenvalue of the Euler-Lagrange
equation 
\begin{equation*}
Ju =\lambda |u|_{1,p}^{p-2}u\quad \text{in }\,(0,1),\quad
u(0)=u(1)=0.
\end{equation*}%
Then (see, e.g., \cite[Remark~6]{djm}),
\[
\lambda_{p} = \min_{u \in W_{0}^{1,p}(0,1) \setminus \{0\}} \frac{|u|_{1,p}^{p}}{|u|_{p}^{p}},
\]
that is, \( c_{p} := \lambda_{p}^{-1/p} \) is the smallest constant such that, for all \( u \in W_{0}^{1,p}(0,1) \), we have
\begin{equation}\label{sobolev}
    |u|_{p} \leq c_{p} \, |u|_{1,p}.
\end{equation}
 Given the continuous
embedding of  $W_{0}^{1,p}(0,1)$   in $C[0,1]$ (see e.g.,    \cite{brezis}), one has
\begin{equation}\label{continuitate}
\left\vert u(t)\right\vert \leq |u|_{1,p}\ \ \ \left( t\in \left[ 0,1\right]
\right) ,
\end{equation}%
for every $u\in W_{0}^{1,p}(0,1)$.

In $W^{1,p}_0(0,1)$, we consider the cone
\begin{equation*}
\begin{aligned} K = \Big\{ u \in W^{1,p}_0(0,1) \, : \, & u \geq 0, \,u \text{
is concave}, \, u(t) = u(1-t)\\ &\text{ and } u(t) \geq
\phi(t) \, |u|_{1,p} \text{ for all } t \in [0,1/2] \Big\}, \end{aligned}
\end{equation*}%
where $\phi $ is given by
\begin{equation}\label{phi}
\phi :[0,1/2]\rightarrow \mathbb{R}_+,\text{ \quad $\phi
(t)=\int_0^t (1-2s)^{\frac{1}{p-1}} ds$.}
\end{equation}
For $0<r<R<\infty $, we assume that the following conditions hold:
\begin{description}
\item[(H1)]
For each \( x \in \mathbb{R}_+ \), the function \( f(x,\cdot) \) is decreasing on $\mathbb{R}_+$, and for each \( y \in \mathbb{R} \), the function \( f(\cdot,y) \) is increasing on $\mathbb{R}_+$. Moreover, \( f \) is even in its second variable, that is,
\[
    f(x, y) = f(x, -y),
\]
for all \( x\in \mathbb{R}_+\) and \( y \in \mathbb{R} \).

\item[(H2)] 
Let \begin{equation*}
    f_0(x)=f(x,0)\quad \text{ and }\quad f_\infty(x)=\lim_{y\to \infty}f(x,y).
\end{equation*}
The functions \( f_0 \) and \( f_\infty \) satisfy 
\begin{equation}\label{conditie r}
f_0(r)<\frac{r^{p-1}}{c_{p}}\quad \text{ and }\quad f_\infty(R\phi
(\beta ))>\frac{R^{p-1}}{2\Phi
},
\end{equation}%
where $\beta\in \left(0,\frac{1}{2} \right)$ is a fixed value, and 
\begin{equation*}
\Phi :=\int_{\beta }^{1/2}\phi (s)ds.
\end{equation*}%

\item[(H3)] For each $x\in (0,\infty)$ and $y\in \mathbb{R}$, the mapping
\begin{equation*}
t\mapsto \frac{f(tx,ty)}{t^{p-1}}\,\text{ }
\end{equation*}%
is strictly increasing on $(0,R].$
\end{description}
In order to apply Theorem~\ref{t1}, we choose the  functional \( \mathcal{F}\colon K\times K\times K_1 \to \mathbb{R} \) to be
 \begin{equation*}
    \mathcal{F}(u,v,w)=\langle J v-Ju,w\rangle.
\end{equation*}
Clearly $\mathcal{F}$ is continuous as $J$  is continuous.

We observe that the problem \eqref{pa} allows for a fixed point formulation \begin{equation*}
    u=T(u),
\end{equation*}
where $T\colon K \to W^{1,p}_0(0,1)$ is given by \begin{equation}\label{T}
    T(u)=J^{-1} N_f(u,u'),
\end{equation}
and \begin{equation}\label{N_f}
   N_f \colon K \times L^p(0,1) \to L^{q}(0,1),\quad N_f(u,\Tilde{u})(t)=f(u(t),\Tilde{u}(t)),
\end{equation}is the Nemytskii operator associated with the function $f$.


In the subsequent, we will present a series of auxiliary results that will be used to prove the invariance of the operator $T$ over $K$. When we say that a function \( u \) is symmetric, we understand symmetric with respect to \( \frac{1}{2} \).

The first result concerns the symmetry and nonnegativity of the solution to a \( p \)-Laplace equation.
\begin{lemma}\label{lema positiv si simetric}
    Let \( h \in L^{q}(0,1) \) be nonnegative a.e. on $(0,1)$ and symmetric. Then so is \( J^{-1}h \).
\end{lemma}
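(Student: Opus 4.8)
The plan is to work from the weak characterization of $u := J^{-1}h$. Since $J$ is bijective with $Ju = -\left(|u'|^{p-2}u'\right)'$, the function $u$ is the unique element of $W_0^{1,p}(0,1)$ satisfying
\begin{equation*}
\int_0^1 |u'|^{p-2}u'\,\varphi'\,dt = \int_0^1 h\,\varphi\,dt \qquad \text{for all } \varphi \in W_0^{1,p}(0,1).
\end{equation*}
I would then establish the two asserted properties separately, each time exploiting this identity together with the uniqueness coming from the bijectivity of $J$.

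For the symmetry, I would introduce the reflected function $\tilde u(t) := u(1-t)$, which again belongs to $W_0^{1,p}(0,1)$ and satisfies $\tilde u(0) = \tilde u(1) = 0$. The goal is to show that $\tilde u$ solves the same problem, i.e.\ $J\tilde u = h$. The cleanest route is to insert $\tilde u$ into the weak formulation and perform the change of variables $s = 1-t$ in the resulting integral, simultaneously replacing the test function $\varphi$ by its reflection $s \mapsto \varphi(1-s)$; the relation $\tilde u'(t) = -u'(1-t)$ produces a sign that is absorbed by the reflection of the test function, and the symmetry $h(t) = h(1-t)$ converts the right-hand side back into $\int_0^1 h\,\varphi$. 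This yields $J\tilde u = h$, and since $J^{-1}$ is single-valued, $\tilde u = u$, which is precisely $u(t) = u(1-t)$.

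For the nonnegativity, I would test the weak formulation with the admissible function $\varphi = u^- := \max\{-u, 0\} \in W_0^{1,p}(0,1)$. Since $(u^-)' = -u'\,\mathbf{1}_{\{u<0\}}$ a.e., the left-hand side becomes $-\int_{\{u<0\}} |u'|^p\,dt \le 0$, whereas the right-hand side $\int_0^1 h\,u^-\,dt \ge 0$ because $h \ge 0$ and $u^- \ge 0$. Equality of the two sides forces both to vanish, so $(u^-)' = 0$ a.e.; as $u^- \in W_0^{1,p}(0,1)$ then has zero weak derivative it is constant, and its vanishing boundary values force $u^- \equiv 0$, giving $u \ge 0$ a.e.

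The step requiring the most care is the symmetry argument: one must verify that the nonlinear operator $|\cdot|^{p-2}(\cdot)$ applied to $\tilde u'$ is compatible with the reflection at the level of the weak formulation. Handling this through the change of variables $s = 1-t$ in the integral identity, rather than by pointwise differentiation of $|\tilde u'|^{p-2}\tilde u'$, avoids regularity technicalities and makes the invariance $J\tilde u = h$ transparent. The nonnegativity step is then a routine weak maximum principle.
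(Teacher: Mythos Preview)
Your proof is correct, and the symmetry argument coincides with the paper's: both define the reflection $w(t)=u(1-t)$, verify $Jw=h$, and conclude $w=u$ by uniqueness; you simply spell out the ``simple computation'' the paper leaves to the reader by working in the weak formulation with the change of variables $s=1-t$.

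The nonnegativity argument, however, is handled differently. The paper dispatches it in one line by invoking the comparison principle for the $p$-Laplacian (citing an external reference), whereas you give a self-contained weak-maximum-principle argument: testing the weak formulation against $u^{-}$ forces $(u^{-})'=0$ a.e., hence $u^{-}\equiv 0$. Your route is more elementary and avoids an external citation at the cost of a few extra lines; the paper's route is terser but relies on the reader accepting the cited comparison result. Both are standard and entirely valid here.
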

\begin{proof}
   Clearly $J^{-1}h$ is well defined (recall that $L^{q}(0,1)\subset W^{-1,q }_0$(0,1)).  Nonnegativity follows directly from the comparison principle for the \( p \)-Laplace operator (see \cite[Lemma~1.3]{ac}).  Let \( u_h = J^{-1}h \) and define \( w(t) = u_h(1 - t) \).  
A simple computation shows that \( Jw = h \).  
Thus, by the uniqueness of the solution to the \( p \)-Laplace equation, we deduce that \( u_h = w \), which proves that \( J^{-1}h \) is symmetric.
\end{proof}

Our second result concerns the preservation of concavity for a \( p \)-Laplace equation.  
Before stating it, let us define
\[
    \varphi \colon \mathbb{R} \to \mathbb{R}, \quad
    \varphi(x) = |x|^{p-2} x = \operatorname{sgn}(x)\, |x|^{p-1}.
\]
Clearly, \( \varphi \) is a homeomorphism from \( \mathbb{R} \) onto \( \mathbb{R} \), and its inverse \( \varphi^{-1} \) is increasing on \( \mathbb{R} \).  
Moreover, the problem \eqref{pa} can be written in terms of \( \varphi \) as
\[
    -\left( \varphi(u') \right)' = f(u, u'), \quad u(0) = u(1) = 0.
\]

\begin{lemma}\label{lema reprezentare}
    For any $h\in L^{q}(0,1)$ that is symmetric with respect to $\frac{1}{2}$,  we have the representation
    \begin{equation}\label{reprezentare J la -1}
    \left(J^{-1} h\right)(t)=\int_0^t (\varphi^{-1}\circ \mu)(s)ds,
\end{equation}
where\begin{equation*}
    \mu(t)=\int_t^{\frac{1}{2}} h(s)ds.
\end{equation*}
If in addition $h$ is
    nonnegative a.e. on  $(0,1)$ and nondecreasing a.e. on $\left(0,\frac{1}{2}\right)$, then   $ J^{-1}h$ is concave on $(0,1)$.
\end{lemma}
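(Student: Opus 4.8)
The plan is not to integrate the boundary value problem from scratch, but to write down the candidate given by the right-hand side of \eqref{reprezentare J la -1}, verify that it solves $Jv=h$ together with the homogeneous boundary conditions, and then invoke the bijectivity of $J$ to conclude that it must coincide with $J^{-1}h$. Accordingly, I would set
\begin{equation*}
v(t):=\int_0^t (\varphi^{-1}\circ\mu)(s)\,ds,\qquad \mu(t)=\int_t^{1/2}h(s)\,ds .
\end{equation*}
Since $h\in L^q(0,1)\subset L^1(0,1)$, the function $\mu$ is absolutely continuous on $[0,1]$ and hence bounded; as $\varphi^{-1}$ is continuous, $\varphi^{-1}\circ\mu$ is bounded, so $v$ is well defined and continuous with $v'=\varphi^{-1}\circ\mu\in L^\infty(0,1)\subset L^p(0,1)$ and $v(0)=0$.

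The first substantive point is the boundary condition $v(1)=0$, which is exactly where the symmetry of $h$ enters. Using $h(1-s)=h(s)$ together with the change of variable $s\mapsto 1-s$, one checks that
\begin{equation*}
\mu(1-t)=\int_{1-t}^{1/2}h(s)\,ds=-\int_{t}^{1/2}h(\sigma)\,d\sigma=-\mu(t),
\end{equation*}
so that $\mu$ is odd about $1/2$. Because $\varphi$, and hence $\varphi^{-1}$, is odd, it follows that $v'(1-t)=\varphi^{-1}(\mu(1-t))=-v'(t)$, i.e.\ $v'$ is even about $1/2$; consequently $v$ is symmetric about $1/2$ and $v(1)=v(0)=0$. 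This shows $v\in W_0^{1,p}(0,1)$.

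Next I would verify the equation $Jv=h$ weakly. By construction $\varphi(v'(t))=\mu(t)$, and for every test function $w\in W_0^{1,p}(0,1)$ an integration by parts (legitimate since $\mu$ is absolutely continuous with $\mu'=-h$ a.e.) gives
\begin{equation*}
\langle Jv,w\rangle=\int_0^1 \varphi(v')\,w'\,dt=\int_0^1 \mu\,w'\,dt=-\int_0^1 \mu'\,w\,dt=\int_0^1 h\,w\,dt=\langle h,w\rangle,
\end{equation*}
the boundary terms vanishing because $w(0)=w(1)=0$. Hence $Jv=h$, and since $J$ is bijective this forces $v=J^{-1}h$, which is precisely the representation \eqref{reprezentare J la -1}.

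Finally, for the concavity statement, the representation reduces everything to the monotonicity of $v'=\varphi^{-1}\circ\mu$. Since $\mu'=-h\le 0$ a.e.\ (this is where the nonnegativity of $h$ is used), $\mu$ is nonincreasing on $(0,1)$; as $\varphi^{-1}$ is increasing, $v'=\varphi^{-1}\circ\mu$ is nonincreasing, and therefore $v=J^{-1}h$ is concave on $(0,1)$. I expect the only genuinely delicate point to be the boundary identity $v(1)=0$: it does not follow from the formula alone but rests essentially on the symmetry of $h$ via the oddness of $\mu$ about $1/2$. The monotonicity of $h$ on $(0,1/2)$ is in fact not needed for concavity itself—nonnegativity already suffices—but it sharpens the profile of the solution and will be exploited when checking membership in the cone $K$.
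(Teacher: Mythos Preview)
Your proof is correct and follows essentially the same route as the paper: define the candidate, verify it solves $Jv=h$ with homogeneous boundary data, invoke uniqueness, and then read off concavity from the monotonicity of $\mu$. You are in fact more careful than the paper in explicitly checking $v(1)=0$ via the oddness of $\mu$ about $1/2$ (the paper passes over this point), and your closing remark that the nondecreasing hypothesis on $h$ is not actually used for concavity is accurate.
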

\begin{proof}
Denote
\[
w(t) = \int_0^t (\varphi^{-1} \circ \mu)(s) \, ds,
\]
Since \( \mu \) is continuous (see, e.g., \cite[Theorem~8.2]{brezis}), it follows that \( w \in C^1[0,1] \). Thus,
\[
\varphi(w') = \mu,
\]
and by \cite[Lemma~8.2]{brezis}, we have that
\[
-\varphi(w')' = h.
\]
By the uniqueness of the solution to the \( p \)-Laplace equation (see, e.g., \cite[Remark~3]{djm}), we obtain \eqref{reprezentare J la -1} as the unique representation of  \( J^{-1}h \).

Assume now that \( h \) is nonnegative a.e.\ on \( (0,1) \) and nondecreasing a.e.\ on \( \left(0,\frac{1}{2}\right) \).
We observe that \( \mu' = -h \le 0 \) a.e.\ on \( (0,1) \); hence \( \mu \) is nonincreasing on \( (0,1) \) (see, e.g., \cite[Chapter~3]{folland}).
Since
\[
    (J^{-1} h)' = \varphi^{-1} \circ \mu,
\]
and the right-hand side is nonincreasing on \( (0,1) \) (as the composition of an increasing and a nonincreasing function), it follows that \( J^{-1} h \) is concave on \( (0,1) \).
\end{proof}
In the following, we establish a Harnack type inequality. 
\begin{lemma}\label{lema harnack}
 Let \( h \in L^{q}(0,1) \) be nonnegative a.e. on $(0,1)$,  symmetric with respect to \( \frac{1}{2} \), and nondecreasing a.e. on \( \left(0,\frac{1}{2}\right ) \). Then, for all $t\in $  \( \left(0,\frac{1}{2}\right ) \), one has  \begin{equation}\label{Harnack dem}
        \left(J^{-1}h\right)(t)\geq \phi(t) \left|  J^{-1}h\right|_{1,p},
    \end{equation}
    where $\phi$ is given in \eqref{phi}.
\end{lemma}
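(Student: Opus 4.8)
The plan is to work with the explicit representation of $u := J^{-1}h$ furnished by Lemma~\ref{lema reprezentare}. Writing $\mu(s) = \int_s^{1/2} h$, that lemma gives $u(t) = \int_0^t (\varphi^{-1}\circ\mu)(s)\,ds$, and since $h \geq 0$ forces $\mu \geq 0$, while $\varphi^{-1}(y) = y^{1/(p-1)}$ for $y \geq 0$, I would first record that $u'(s) = \mu(s)^{1/(p-1)}$ on $(0,1/2)$. The three properties of $\mu$ that drive the argument are: it is continuous (being the integral of an $L^q$ function), nonincreasing (as $\mu' = -h \leq 0$), and---crucially---concave on $[0,1/2]$, the last point coming from the hypothesis that $h$ is nondecreasing, so that $\mu' = -h$ is nonincreasing. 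Correspondingly $u'$ is continuous, nonnegative and nonincreasing, with maximal value $u'(0) = \mu(0)^{1/(p-1)}$.

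The heart of the proof is a chord estimate for the concave function $\mu$. Since $\mu(1/2) = 0$ and, for $s \in [0,1/2]$, one has the convex decomposition $s = (1-2s)\cdot 0 + 2s \cdot \tfrac12$, concavity yields $\mu(s) \geq (1-2s)\,\mu(0)$. Raising to the (increasing) power $1/(p-1)$ gives $u'(s) = \mu(s)^{1/(p-1)} \geq \mu(0)^{1/(p-1)}(1-2s)^{1/(p-1)}$, and integrating from $0$ to $t$ produces, by the very definition \eqref{phi} of $\phi$, the intermediate bound $u(t) \geq \mu(0)^{1/(p-1)}\,\phi(t)$.

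It then remains only to dominate the norm $|u|_{1,p}$ by the constant $\mu(0)^{1/(p-1)} = u'(0)$. Here I would use that $u$ is symmetric about $\tfrac12$ (Lemma~\ref{lema positiv si simetric}), so that $|u'(t)| = |u'(1-t)|$ and hence $|u|_{1,p}^p = \int_0^1 |u'|^p = 2\int_0^{1/2} u'(s)^p\,ds$. Since $u'$ is nonnegative and nonincreasing on $(0,1/2)$, we have $u'(s) \leq u'(0)$ there, so $|u|_{1,p}^p \leq 2\int_0^{1/2} u'(0)^p\,ds = u'(0)^p = \mu(0)^{p/(p-1)}$, i.e. $|u|_{1,p} \leq \mu(0)^{1/(p-1)}$. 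Combining this with the intermediate bound gives $u(t) \geq \mu(0)^{1/(p-1)}\phi(t) \geq \phi(t)\,|u|_{1,p}$, which is exactly \eqref{Harnack dem}.

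The only genuinely delicate point I anticipate is the passage from ``$h$ nondecreasing a.e.'' to ``$\mu$ concave''; I would justify it by noting that $\mu$ is absolutely continuous with $\mu' = -h$ nonincreasing a.e., and that an absolutely continuous function with an a.e.\ nonincreasing derivative is concave. Everything else---positivity of $\mu$, continuity and monotonicity of $u'$, and the symmetric splitting of the norm---is routine once the representation of Lemma~\ref{lema reprezentare} is in hand. I expect this concavity/chord step to be precisely where all the hypotheses on $h$ (nonnegativity, symmetry, and monotonicity) are used simultaneously, and thus the main obstacle of the argument.
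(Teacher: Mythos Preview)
Your argument is correct and essentially the same as the paper's: the paper introduces the auxiliary function $\sigma(t)=u_h'(t)^{p-1}-(1-2t)\,u_h'(0)^{p-1}$, shows that $\sigma$ is concave with $\sigma(0)=\sigma(1/2)=0$ and hence $\sigma\geq 0$, which is precisely your chord estimate $\mu(s)\geq(1-2s)\,\mu(0)$ for the concave function $\mu=u_h'^{\,p-1}$. The remaining steps---raising to the power $1/(p-1)$, integrating to obtain $u(t)\geq u'(0)\,\phi(t)$, and bounding $|u|_{1,p}\leq u'(0)$ via symmetry and monotonicity of $u'$---coincide with the paper's proof line by line.
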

\begin{proof}
Let \( u_h = J^{-1}h \). 
Since \( u_h \) is symmetric with respect to \( \frac{1}{2} \) (see Lemma~\ref{lema positiv si simetric}), one has (recall that $u_h\in C^1 [0,1]$),
\[
u_h'\left( \frac{1}{2} \right) = 0.
\]
It is not difficult to prove that
\begin{equation}\label{ineq 1}
    u_h'(t) \geq (1 - 2t)^{\frac{1}{p-1}} u_h'(0),
\end{equation}
for all $t\in \left(0, \frac{1}{2} \right).$ To see this, let
\[
\sigma\colon \left(0, \frac{1}{2} \right)\to \mathbb{R}, \quad \sigma(t) = u_h'(t)^{p-1} - (1 - 2t) u_h'(0)^{p-1}.
\]
Since \( u_h \) is concave and \( u_h'\left( \frac{1}{2} \right) = 0 \), we deduce that \( u_h'(t) \geq 0 \) for all \( t \in \left(0, \frac{1}{2} \right) \).  
Thus,
\[
    u_h'(t)^{p-1} = -J u_h,
\]
which implies
\[
    \left( u_h'(t)^{p-1} \right)' = -h.
\]
Therefore,
\[
    \sigma'(t) = -h(t) + 2 u_h'(0)^{p-1}.
\]
which is nonincreasing a.e.\ on \( \left(0, \frac{1}{2} \right) \). Following \cite[Remark~2.5]{jp}, we conclude  that \( \sigma \) is concave. As \( \sigma(0) = \sigma\left( \frac{1}{2} \right) = 0 \), we have that \( \sigma(t) \geq 0 \) for all \( t \in \left(0, \frac{1}{2} \right) \), i.e., \eqref{ineq 1} holds.

Since $u_h(0)=0$, integrating  \eqref{ineq 1} from $0$ to $t$ ($t\leq \frac{1}{2}$), we obtain \begin{equation*}
    u_h(t)\geq u'_h(0) \int_0^t (1-2s)^{\frac{1}{p-1}}ds=\phi(t) u'_h(0).
\end{equation*}
Finally, the conclusion follows from the above inequality and
 \begin{equation*}
     |u_h|_{1,p } =\left(\int_0^1 |u'(s)|^pds\right)^{\frac{1}{p}}=\left(2\int_0^{1/2} u'(s)^pds \right)^{\frac{1}{p}}\leq \left(u'(0)^p\right)^{\frac{1}{p}}=u_h'(0).
 \end{equation*}
\end{proof}
Now we prove that the Nemytskii's operator $N_f$ is well-defined, bounded (maps bounded sets into bounded sets) and continuous.
\begin{lemma}\label{bine definire nemitski}
     Assume condition $\text{(H1)}$ holds true. Then the Nemytskii operator $N_f$ is well-defined, bounded (maps bounded sets into bounded sets) and continuous from $K \times L^p(0,1)$  to $L^{q}(0,1)$.
\end{lemma}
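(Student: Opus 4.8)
The plan is to use condition (H1) to derive a uniform pointwise bound on $N_f$, which makes well-definedness and boundedness essentially immediate, and then to prove continuity by combining this bound with Vitali's theorem through a standard subsequence argument.

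First I would record the crucial consequence of (H1). Since $f$ is even in its second variable and $f(x,\cdot)$ is decreasing on $\mathbb{R}_+$, for every $x\in\mathbb{R}_+$ and every $y\in\mathbb{R}$ one has $0\le f(x,y)\le f(x,0)=f_0(x)$. Moreover $f_0=f(\cdot,0)$ is increasing, because $f(\cdot,y)$ is increasing for each $y$. Now take $(u,\tilde u)\in K\times L^p(0,1)$. By \eqref{continuitate}, $0\le u(t)\le |u|_{1,p}$ for all $t$, so that
\begin{equation*}
0\le N_f(u,\tilde u)(t)=f(u(t),\tilde u(t))\le f_0(u(t))\le f_0\!\left(|u|_{1,p}\right)\qquad\text{a.e.\ on }(0,1).
\end{equation*}
Hence $N_f(u,\tilde u)\in L^\infty(0,1)\subset L^q(0,1)$, which proves well-definedness. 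If $(u,\tilde u)$ ranges over a set with $|u|_{1,p}\le C$, the bound $f_0(C)$ is uniform, giving $|N_f(u,\tilde u)|_q\le f_0(C)$; thus $N_f$ maps bounded sets into bounded sets.

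For continuity, let $(u_k,\tilde u_k)\to(u,\tilde u)$ in $K\times L^p(0,1)$. By \eqref{continuitate}, $|u_k(t)-u(t)|\le |u_k-u|_{1,p}$, so $u_k\to u$ uniformly, hence pointwise, and $(|u_k|_{1,p})$ is bounded, say by $C$. To conclude it suffices to show that every subsequence of $\left(N_f(u_k,\tilde u_k)\right)$ has a further subsequence converging to $N_f(u,\tilde u)$ in $L^q$ (see, e.g., \cite[Lemma~1.1]{herve}). Along any subsequence, the $L^p$-convergence $\tilde u_k\to\tilde u$ allows us to extract a further subsequence, not relabeled, with $\tilde u_k\to\tilde u$ a.e. Combining this with $u_k(t)\to u(t)$ and the continuity of $f$ yields
\begin{equation*}
N_f(u_k,\tilde u_k)(t)=f\!\left(u_k(t),\tilde u_k(t)\right)\longrightarrow f\!\left(u(t),\tilde u(t)\right)=N_f(u,\tilde u)(t)\qquad\text{a.e.}
\end{equation*}

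To upgrade this a.e.\ convergence to convergence in $L^q$ I would invoke Vitali's theorem. The uniform bound $0\le N_f(u_k,\tilde u_k)(t)\le f_0(C)$ gives, for every measurable $D\subset(0,1)$,
\begin{equation*}
\int_D \left|N_f(u_k,\tilde u_k)\right|^q\,dt\le f_0(C)^q\,\mu(D),
\end{equation*}
so the uniform-integrability hypothesis of Vitali's theorem holds with $\delta_\varepsilon=\varepsilon/f_0(C)^q$. Hence $N_f(u_k,\tilde u_k)\to N_f(u,\tilde u)$ in $L^q(0,1)$ along the chosen subsequence, and the subsequence principle yields convergence of the whole sequence, establishing continuity. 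The only genuine obstacle is the bookkeeping of this subsequence argument; the essential structural input is the uniform bound $f(x,y)\le f_0(x)$, which is precisely what removes the growth restriction normally required for a Nemytskii operator to act between $L^p$-type spaces and makes Vitali's theorem directly applicable.
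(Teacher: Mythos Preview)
Your proof is correct and follows essentially the same route as the paper: derive the pointwise bound $0\le f(x,y)\le f(x,0)$ from (H1), combine it with the embedding $W^{1,p}_0(0,1)\hookrightarrow C[0,1]$ to obtain a uniform $L^\infty$ bound on $N_f$, and use this bound both for well-definedness/boundedness and to feed Vitali's theorem in the continuity step via the standard subsequence principle. The only cosmetic difference is that you exploit the monotonicity of $f_0$ to write $f_0(u(t))\le f_0(|u|_{1,p})$ directly, whereas the paper writes $\sup_{x\in[0,M]}|f(x,0)|$; this is the same bound.
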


\begin{proof}
From (H1), we have
\begin{equation}\label{marginire f}
    0 \leq f(x,y) \leq f(x,0),
\end{equation}
for all \( x \in \mathbb{R}_+ \) and \( y \in \mathbb{R} \).  
Indeed, for \( x \in \mathbb{R}_+ \) and \( y \in \mathbb{R} \), the evenness of \( f \) in its second variable gives \( f(x,y) = f(x,|y|) \).  
Since \( f(x,\cdot) \) is decreasing on \( \mathbb{R}_+ \), we obtain \( f(x,|y|) \leq f(x,0) \).  
The left-hand side of \eqref{marginire f} follows from the definition of \( f \).

\textit{Well-definedness.} Let $u\in K$ and $\Tilde{u}\in L^p(0,1).$
   Since \( f \) is continuous, it follows that \( N_f(u,\Tilde{u}) \) is measurable (see, e.g., \cite[Proposition~9.1]{p semilinear}).  
Moreover, as \( f(u,0) \in C[0,1] \), using \eqref{marginire f} we obtain
\[
    \int_0^1 f(u,\tilde{u})^{q} \, dx 
    \leq \int_0^1 f(u,0)^{q} \, dx 
    = |f(u,0)|_{q}^{q},
\]
which shows that \(N_f(u,\Tilde{u}) \in L^{q}(0,1) \).

\textit{Boundedness.} Let $u\in K$ and $\Tilde{u}\in L^p(0,1).$
Using \eqref{marginire f}, to prove the boundedness of \( N_f \), it suffices to show that the operator \( u \mapsto f(u,0) \) maps bounded sets into bounded sets from \( W^{1,p}_0(0,1) \) to \( L^{q}(0,1) \). This however  follows immediately from the continuous embedding of \( W^{1,p}_0(0,1)\)  into $ C[0,1] \) and the continuity of \( f \).

\textit{Continuity.} Let $(u_k,\Tilde{u}_k)\in K\times L^p(0,1)$ be such that $(u_k,\Tilde{u}_k)\to (u,\Tilde{u})$ in $W^ {1,p}_0(0,1) \times L^p(0,1)$ as $k\to \infty$. Then, there exists a subsequence  (still denoted by $(u_k,\Tilde{u}_k)$) with the property that (see, e.g., \cite[Theorem 4.9]{brezis}) \begin{equation*}
    (u_k(x),\Tilde{u}_k(x))\to (u(x), \Tilde{u}(x)) \, \, \text{ a.e. on }(0,1).
\end{equation*}
    Passing again  to a subsequence if necessarily, we may assume that \begin{equation*}
        N_f(u_k,\Tilde{u}_k)(x)\to N_f(u,\Tilde{u})(x)\, \, \text{ a.e. on }(0,1).
    \end{equation*}
    Since $u_k\to u$ in $W^{1,p}_0(0,1)$, using \eqref{continuitate}, there exists $M>0$ such that $|u_k(t)|\leq M$, for all $t\in [0,1]$ and $k\in \mathbb{N}$.  Denote $M_1 := \sup_{x \in [0,M]} |f(x,0)|$.

Let \(\varepsilon > 0\) and set \(\delta = \frac{\varepsilon^{q}}{M_1}\). 
Then, for any \(D \subset [0,1]\) with \(meas(D) < \delta\), using \eqref{marginire f}, we estimate
\begin{equation*}
    |N_f(u_k,\Tilde{u}_k)|_{L^{q}}\leq \left(\int_D |f(u_k(x),0)|^{q}dx\right)^{\frac{1}{q}}\leq meas(D)^{\frac{1}{q}} M_1\leq \delta^{\frac{1}{q}}M_1=\varepsilon.
\end{equation*}
By Vitali's theorem, \(N_f(u_k,\tilde{u}_k)\) converges to \(N_f(u,\tilde{u})\) in \(L^{q}(0,1)\).  
Since the limit is the same regardless of the chosen subsequence, we conclude that the entire sequence \(N_f(u_k,\tilde{u}_k)\) converges to \(N_f(u,\tilde{u})\) in \(L^{q}(0,1)\) (see, e.g., \cite[Lemma~1.1]{herve}).
\end{proof}
Now we are ready to show that $T(K)\subset K$. \begin{lemma}\label{invarianta T K}
    Assume condition $\text{(H1)}$ holds true. Then, \begin{equation*}
        T(K)\subset K.
    \end{equation*}
\end{lemma}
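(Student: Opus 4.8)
The plan is to reduce the membership $T(u)\in K$ to the three structural lemmas already established. Fix $u\in K$ and set $h:=N_f(u,u')$, so that $T(u)=J^{-1}h$. By Lemma~\ref{bine definire nemitski} (under (H1)) we already know $h\in L^{q}(0,1)$, hence $J^{-1}h$ is well defined. The cone $K$ is carved out by four requirements---nonnegativity, symmetry about $\tfrac12$, concavity, and the Harnack-type bound $u(t)\ge\phi(t)|u|_{1,p}$ on $[0,1/2]$---and Lemmas~\ref{lema positiv si simetric}, \ref{lema reprezentare}, and \ref{lema harnack} deliver exactly these four properties for $J^{-1}h$, provided that $h$ itself is (i) nonnegative a.e., (ii) symmetric about $\tfrac12$, and (iii) nondecreasing a.e.\ on $(0,1/2)$. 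Thus the whole proof amounts to verifying (i)--(iii) for $h=f(u(\cdot),u'(\cdot))$, after which I would simply apply the three lemmas in turn and read off that $T(u)$ satisfies all the defining conditions of $K$.

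Property (i) is immediate, since $f$ takes values in $\mathbb{R}_+$ and $u\ge 0$. For (ii), I would use that $u\in K$ is symmetric, $u(t)=u(1-t)$, which upon differentiation gives $u'(1-t)=-u'(t)$ a.e.; hence
\begin{equation*}
h(1-t)=f\bigl(u(1-t),u'(1-t)\bigr)=f\bigl(u(t),-u'(t)\bigr)=f\bigl(u(t),u'(t)\bigr)=h(t),
\end{equation*}
where the third equality is precisely the evenness of $f$ in its second argument postulated in (H1). This settles the symmetry of $h$.

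The real work is property (iii), and I expect it to be the main obstacle, since it is the only step that genuinely couples the two monotonicity hypotheses on $f$ with the geometry of elements of $K$. First I would record the behaviour of $u$ and $u'$ on the half-interval: as $u$ is concave, its derivative $u'$ is nonincreasing, and symmetry forces $u'(\tfrac12)=0$, so that $u'\ge 0$ on $(0,1/2)$ and $u$ is itself nondecreasing there. Consequently, for a.e.\ $t_1<t_2$ in $(0,1/2)$ one has $u(t_1)\le u(t_2)$ and $0\le u'(t_2)\le u'(t_1)$. Now I would exploit both monotonicities from (H1) pushing in the same direction: using that $f(\cdot,y)$ is increasing on $\mathbb{R}_+$ and that $f(x,\cdot)$ is decreasing on $\mathbb{R}_+$,
\begin{equation*}
f\bigl(u(t_1),u'(t_1)\bigr)\le f\bigl(u(t_2),u'(t_1)\bigr)\le f\bigl(u(t_2),u'(t_2)\bigr),
\end{equation*}
which is exactly $h(t_1)\le h(t_2)$. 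Hence $h$ is nondecreasing a.e.\ on $(0,1/2)$. With (i)--(iii) in hand, Lemma~\ref{lema positiv si simetric} yields nonnegativity and symmetry of $J^{-1}h$, Lemma~\ref{lema reprezentare} yields its concavity, and Lemma~\ref{lema harnack} yields $\left(J^{-1}h\right)(t)\ge\phi(t)\,|J^{-1}h|_{1,p}$ on $(0,1/2)$; together these show $T(u)=J^{-1}h\in K$, completing the proof. The only technical care needed is that concavity produces $u'$ merely a.e.\ and the monotonicities hold a.e., which is harmless, since all three lemmas require their hypotheses only almost everywhere.
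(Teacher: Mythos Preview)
Your proposal is correct and follows essentially the same route as the paper: verify that $h=N_f(u,u')$ is nonnegative, symmetric about $\tfrac12$, and nondecreasing on $(0,1/2)$, then invoke Lemmas~\ref{lema positiv si simetric}, \ref{lema reprezentare}, and \ref{lema harnack} to conclude $T(u)\in K$. In fact you spell out the monotonicity step (iii) more explicitly than the paper does, which simply asserts it as a consequence of (H1).
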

\begin{proof}
Let \(u \in K\). Then, by Lemma~\ref{bine definire nemitski}, we have 
\(N_f(u,u') \in L^{q}(0,1)\). Since \(u \geq 0\), it follows that 
\(f(u,u') \geq 0\). The symmetry of \(u\) implies that \(u'\) is 
antisymmetric, i.e.,
\[
    u'(t) = -u'(1-t) \quad \text{a.e.\ on \((0,1)\)}.
\]
Thus, condition (H1) ensures that \(N_f(u,u')\) is symmetric. Moreover, since \(u\) 
is nondecreasing on \(\left(0,\frac{1}{2}\right)\) and \(u'\) is 
nonincreasing a.e.\ on \(\left(0,\frac{1}{2}\right)\), condition (H1) also 
ensures that \(N_f(u,u')\) is nondecreasing a.e.\ on 
\(\left(0,\frac{1}{2}\right)\). Now applying Lemmas \ref{lema positiv si simetric}, \ref{lema reprezentare} and \ref{lema harnack}, we find that $J^{-1}f(u,u')=T(u)$ is nonegative, symmetric, concave and satisfies the Harnack inequality \eqref{Harnack dem}. Therefore, $T(u)\in K$.
\end{proof}
Next, we prove the complete continuity of the operator $T$.
\begin{lemma}\label{T compact}
   Under assumption $\text{(H1)}$, the operator $T$ is completely continuous from $K$ to $K$.
\end{lemma}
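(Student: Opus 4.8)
The plan is to factor the operator $T = J^{-1} \circ N_f \circ (\mathrm{id}, \tfrac{d}{dt})$ and verify complete continuity by combining the continuity of each factor with a compactness argument coming from the regularity gain of $J^{-1}$. First I would recall that, by Lemma \ref{bine definire nemitski}, the map $u \mapsto N_f(u,u')$ is well-defined, bounded, and continuous from $K$ (with the $W^{1,p}_0$ norm) into $L^{q}(0,1)$; here the differentiation $u \mapsto u'$ is continuous from $W^{1,p}_0(0,1)$ into $L^p(0,1)$, so the composite $u \mapsto (u,u')$ lands in the domain $K \times L^p(0,1)$ of $N_f$. Composing with the continuous map $J^{-1}\colon W^{-1,q}_0(0,1) \to W^{1,p}_0(0,1)$ (continuity stated in the properties of $J$ recalled before Lemma \ref{lema positiv si simetric}) immediately gives that $T$ is continuous from $K$ into $K$, the inclusion $T(K) \subset K$ being exactly Lemma \ref{invarianta T K}.

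It remains to establish compactness, i.e.\ that $T$ maps bounded subsets of $K$ into relatively compact subsets of $W^{1,p}_0(0,1)$. Let $B \subset K$ be bounded in the $W^{1,p}_0$ norm. By the boundedness of $N_f$ (Lemma \ref{bine definire nemitski}), the image $N_f(B, B')$ is bounded in $L^{q}(0,1)$, and since $J^{-1}$ is bounded (maps bounded sets to bounded sets, as recalled for the duality mapping of $W^{1,p}_0(0,1)$), the set $T(B)$ is bounded in $W^{1,p}_0(0,1)$. The key compactness step is then the compact embedding $W^{1,p}_0(0,1) \hookrightarrow C[0,1]$; however, to get relative compactness \emph{in the $W^{1,p}_0$ norm itself}, I would instead use the explicit representation from Lemma \ref{lema reprezentare}, namely $(T u)'(t) = (\varphi^{-1}\circ \mu_u)(t)$ with $\mu_u(t) = \int_t^{1/2} f(u,u')\,ds$. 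Because $\{f(u,u') : u \in B\}$ is bounded in $L^q$, the functions $\mu_u$ are uniformly bounded and equicontinuous on $[0,1]$ (each $\mu_u$ is absolutely continuous with a uniform modulus of continuity coming from the $L^q$ bound via Hölder), so by Arzelà--Ascoli $\{\mu_u\}$ is relatively compact in $C[0,1]$; applying the continuous map $\varphi^{-1}$ preserves this, whence $\{(Tu)'\}$ is relatively compact in $C[0,1]$ and therefore in $L^p(0,1)$, giving relative compactness of $T(B)$ in $W^{1,p}_0(0,1)$.

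The main obstacle I anticipate is the last point: converting an $L^q$ bound on the right-hand sides into genuine norm-compactness of the \emph{derivatives} $(Tu)'$ rather than mere compactness in $C[0,1]$ of the functions $Tu$. The representation of Lemma \ref{lema reprezentare} is what makes this work, since it expresses $(Tu)'$ pointwise through the antiderivative $\mu_u$ of an $L^q$ function, and it is precisely the uniform equicontinuity of these antiderivatives (a Hölder estimate $|\mu_u(t) - \mu_u(s)| \le |f(u,u')|_q\,|t-s|^{1/p}$) that yields the Arzelà--Ascoli hypothesis; the continuity of $\varphi^{-1}$ then transfers compactness to the derivatives. An alternative, cleaner route avoiding the explicit representation is to invoke a known compactness property of $J^{-1}$ for the one-dimensional $p$-Laplacian (continuity of $J^{-1}$ together with the compact embedding $W^{1,p}_0 \hookrightarrow C[0,1]$ and the $S_+$/monotonicity structure of $J$), concluding that $J^{-1}$ is completely continuous as a map from $L^q(0,1)$ into $W^{1,p}_0(0,1)$; composing with the continuous, bounded operator $u \mapsto N_f(u,u')$ then yields complete continuity of $T$ at once.
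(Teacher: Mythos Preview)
Your proposal is correct, and the factorization through the Nemytskii operator and $J^{-1}$, together with the appeal to Lemma~\ref{bine definire nemitski} for continuity and boundedness, matches the paper. The difference lies in where the compactness is extracted.

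The paper inserts one more factor into the composition, namely the inclusion $I\colon L^{q}(0,1)\hookrightarrow W^{-1,q}_0(0,1)$, and observes that \emph{this} map is compact: by Rellich--Kondrachov $W^{1,p}_0(0,1)\hookrightarrow L^{p}(0,1)$ is compact, and Schauder's theorem on adjoints of compact operators then gives compactness of the dual embedding $L^{q}(0,1)\hookrightarrow W^{-1,q}_0(0,1)$. Since $J^{-1}$, $N_f$ and $P(u)=(u,u')$ are all continuous and bounded, the composition with the compact $I$ is completely continuous. Your primary route instead stays in the one-dimensional ODE picture: you use the representation $(Tu)'=\varphi^{-1}\circ\mu_u$ from Lemma~\ref{lema reprezentare} and run Arzel\`a--Ascoli on the antiderivatives $\mu_u$, which is valid (note that Lemma~\ref{lema reprezentare} requires symmetry of the right-hand side, which holds on $K$ by the argument in Lemma~\ref{invarianta T K}). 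The paper's argument is shorter and dimension-free---it would go through verbatim for the $p$-Laplacian on a bounded domain in $\mathbb{R}^N$---whereas yours is more elementary (no Schauder theorem) but leans on the specific $1$D integral representation and the symmetry built into $K$. Your ``alternative, cleaner route'' at the end is close in spirit to the paper's, but the paper pins the compactness precisely at the embedding $L^{q}\hookrightarrow W^{-1,q}$ via Schauder duality rather than at an abstract compactness property of $J^{-1}$.
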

\begin{proof}
    We observe that \begin{equation*}
        T=J^{-1}\circ P\circ N_f \circ I, 
    \end{equation*}
    where \begin{equation*}
        P\colon K\to K \times L^p(0,1), \quad P(u)=(u,u'),
    \end{equation*}
    and $I$ is the embedding operator \begin{equation*}
        I\colon L^{q}(0,1)\to W^{-1,q}_0(0,1),\quad I(u)=\langle u,\cdot\rangle.
    \end{equation*}
 Since, by the Rellich--Kondrachov theorem (see, e.g., \cite{brezis}), 
\(W^{1,p}_0(0,1)\) embeds compactly into \(L^{p}(0,1)\), it follows that 
\(L^{q}(0,1)\) embeds compactly into \(W^{-1,q}_0(0,1)\), as  
the dual spaces of \(L^{p}(0,1)\) and \(W^{1,p}_0(0,1)\), respectively. This follows from the Schauders's theorem on compact operators (see, \cite[Theorem 6.4]{brezis}, \cite[Theorem 4.19]{rudinFA} or \cite[Chapter 9]{ka}). Additionally, it is clear that \(I\) is  continuous and bounded.
Since \( J^{-1} \), \( P \), \( N_f \), and \( I \) are continuous and bounded, and one of these operators is compact, it follows that \( T \)  is completely continuous, as the composition of all these four operators.


\end{proof}

\begin{theorem}
\label{pa1} Under conditions $\text{(H1)-(H3)}$, the problem~\eqref{pa} admits a
solution $u\in K$ such that
\begin{equation*}
r<|u|_{1,p}<R.
\end{equation*}
Moreover, $u$ maximizes the energy function $\mathcal{E}(v)$ on the interval $\left(
r,R\right) ,$ along its own direction $v=\frac{1}{\left\vert u\right\vert }%
u. $
\end{theorem}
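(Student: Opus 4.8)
\emph{Strategy.} The plan is to apply Theorem~\ref{t1} with the set of conditions (h2) and (h3), to the completely continuous operator $T\colon K\to K$ of \eqref{T} (Lemmas~\ref{T compact} and~\ref{invarianta T K}) and the continuous functional $\mathcal{F}(u,v,w)=\langle Jv-Ju,w\rangle$. Since the abstract framework of Section~\ref{s1} is already in place, the whole proof reduces to checking (h1), (h2) and (h3). Note that any fixed point $u$ of $T$ satisfies $Ju=N_f(u,u')$, i.e.\ it solves \eqref{pa}; hence producing a fixed point in $U_b$ yields the asserted solution together with its localization $r<|u|_{1,p}<R$ and its energy-maximizing property.

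\emph{The radial energy and uniqueness in (h1).} First I would compute, using $J(sv)=s^{p-1}Jv$, $\langle Jv,v\rangle=\theta(1)=1$ and $JT(sv)=N_f(sv,sv')$, that
\[
\frac{d}{dt}\mathcal{E}(v)(t)=\mathcal{F}(T(tv),tv,v)=t^{p-1}-\int_0^1 f\big(tv(x),tv'(x)\big)\,v(x)\,dx .
\]
Factoring out $t^{p-1}>0$ rewrites this as $t^{p-1}\big(1-g_v(t)\big)$, where $g_v(t)=\int_0^1 \frac{f(tv(x),tv'(x))}{t^{p-1}}\,v(x)\,dx$. By (H3), on the set $\{v>0\}$ (of positive measure since $|v|_{1,p}=1$) the integrand is strictly increasing in $t$ on $(0,R]$, so $g_v$ is strictly increasing there. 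Consequently the derivative of $\mathcal{E}(v)$ changes sign at most once, and only from positive to negative; this already forces uniqueness of the maximum and rules out a minimum.

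\emph{Boundary estimates (the crux).} It then remains to show that the derivative is positive at $t=r$ and negative at $t=R$, so that its unique zero $t_v$ lies in $(r,R)$ and (h1) holds with $r_0=r$, $R_0=R$. At $t=r$, using \eqref{marginire f}, the monotonicity of $f_0$, the bound $v(x)\le|v|_{1,p}=1$ from \eqref{continuitate}, Hölder's inequality and the Sobolev inequality \eqref{sobolev}, I would estimate $\int_0^1 f(rv,rv')v\,dx\le f_0(r)\int_0^1 v\,dx\le f_0(r)\,c_p<r^{p-1}$ by the first inequality in (H2). At $t=R$, the evenness of $f$ in its second variable together with the symmetry of $v$ (and antisymmetry of $v'$) makes the integrand symmetric about $1/2$, so the integral equals $2\int_0^{1/2}$; restricting to $[\beta,1/2]$ and using $f(x,y)\ge f_\infty(x)$, the monotonicity of $f_\infty$, the cone estimates $v(x)\ge\phi(x)\ge\phi(\beta)$ and $\int_\beta^{1/2}\phi=\Phi$, I would obtain $\int_0^1 f(Rv,Rv')v\,dx\ge 2f_\infty(R\phi(\beta))\Phi>R^{p-1}$ by the second inequality in (H2). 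This is the main obstacle: the delicate point is that the factor $2$ gained from the symmetry is exactly what matches the $2\Phi$ appearing in (H2), and without it the lower bound would fall short of $R^{p-1}$.

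\emph{Conditions (h2) and (h3).} Finally, (h3) is immediate and independent of $f$, since $\mathcal{F}(tu,u,u/|u|)=\big(1-t^{p-1}\big)\theta(|u|)$ vanishes only for $t=1$. For (h2), Remark~\ref{r1} gives $\langle JT(u),u\rangle=\langle Ju,u\rangle=\theta(|u|)|u|$ for every $u\in U_b$; combining this with $\langle JT(u),u\rangle\le|JT(u)|\,|u|=\theta(|T(u)|)|u|$ and the monotonicity of $\theta$ yields $|T(u)|\ge|u|>r$, whence $\inf_{u\in U_b}|T(u)|\ge r>0$. With (h1), (h2) and (h3) verified, Theorem~\ref{t1} (first alternative) provides a fixed point of $T$ in $U_b$, which is the desired solution.
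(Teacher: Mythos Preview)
Your proposal is correct and follows essentially the same route as the paper's own proof: you verify (h1) by computing $\frac{d}{dt}\mathcal{E}(v)(t)=t^{p-1}\big(1-g_v(t)\big)$, use (H3) for the strict monotonicity of $g_v$, and the two boundary estimates from (H2) exactly as the paper does (including the factor~$2$ from symmetry at $t=R$); your checks of (h2) and (h3) also coincide with the paper's, the only cosmetic difference being that you spell out (h3) directly while the paper refers back to Corollary~\ref{corolar}.
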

\begin{proof}
    \textit{Check of (h1). } We will show that for any $v\in K_1$, we have that \begin{equation}\label{capete}
        \frac{d}{dt}\mathcal{E}(v)(t)|_{t=r}>0,\quad
\frac{d}{dt}\mathcal{E}(v)(t)|_{t=R}<0,    \end{equation} 
and the mapping \begin{equation}\label{unica zero}
    t\mapsto  \frac{d}{dt}\mathcal{E}(v)(t),
\end{equation}
has a unique zero. It is easily to see that whenever \eqref{capete} and \eqref{unica zero} hold, then the function $t\mapsto \mathcal{E}(v)(t)$ has a unique maximum, as required by the abstract result. 

For any $v\in K_1$ and $t>0$, one has \begin{align*}
    \frac{d}{dt}\mathcal{E}(v)(t)&=t^{p-1} |v|_{1,p}^p-\int_0^1 f\left(t v(s),t v'(s)\right)v(s)ds\\&=t^{p-1}\left(|v|_{1,p}^p-\int_0^1 \frac{f\left(t v(s),t v'(s)\right)}{t^{p-1}}v(s)ds \right).
\end{align*}
Using \eqref{marginire f} and the monotonicity of \( f \) in the first variable, we obtain
\begin{equation}\label{estimare f r}
    f(rv(s), rv'(s)) \leq f(rv(s), 0) \leq f(r,0) = f_0(r), \quad s \in (0,1),
\end{equation}
where the latter inequality follows from \eqref{continuitate}, which ensures that \( v(s) \leq 1 \) for all \( s \in [0,1] \).


Thus, \eqref{estimare f r} yields
\begin{equation*}
    \int_0^1 \frac{f\left(r v(s),r v'(s)\right)}{r^{p-1}}v(t)ds\leq \frac{f_0\left(r\right)}{r^{p-1}}\int_0^1 v(s)ds<\frac{1}{c_p}|v|_{p}\leq 1.
\end{equation*}
Consequently, \begin{equation*}
    \frac{d}{dt}\mathcal{E}(v)(t)|_{t=r}=r^{p-1}\left(1-\int_0^1 \frac{f\left(r v(s),r v'(s)\right)}{r^{p-1}}v(s)ds \right)>0,
\end{equation*}
for all $v\in K_1$.

To obtain the second inequality in \eqref{capete}, 
observe that for any \( v \in K_1 \), the Harnack inequality, together with the monotonicity of \( v \) on \( \left(0,\frac{1}{2}\right) \), gives
\begin{equation}\label{v ineq}
        v(t) \geq \phi(t) \,\,\, \text{for all } t \in \left(0,\frac{1}{2}\right) \quad \text{ and }\quad 
    v(t) \geq \phi(\beta) \,\,\, \text{for all } t \in \left(\beta,\frac{1}{2}\right).
\end{equation}
From \eqref{v ineq} we obtain
\begin{equation}\label{Phi ineq}
    \int_{\beta}^1 v(s) \, ds \geq \Phi.
\end{equation}
Next, by (H1) we have
\begin{equation}\label{f inf ineq}
    f(x,y) \geq f_\infty(x), \quad x \in \mathbb{R}_+, \ y \in \mathbb{R}.
\end{equation}
Therefore, the monotonicity of \( f \) in its first variable, together with \eqref{v ineq} and \eqref{f inf ineq}, ensures that
\begin{equation}\label{f(Rphi)}
    f(Rv(s),Rv'(s)) \geq f_\infty(Rv(s)) 
    \geq f_\infty(Rv(\beta)) 
    \geq f_\infty(R\phi(\beta)), 
\end{equation}
for all $s\in \left(\beta,\frac{1}{2}\right)$.
Combining the second inequality from (H2), \eqref{Phi ineq} and \eqref{f(Rphi)}, we estimate
 \begin{align}
\nonumber    \int_0^1 \frac{f(R v(s), R v'(s))}{R^ {p-1}}v(s)ds
 &  =2\int_{0}^{\frac{1}{2}} \frac{f(R v(s), R v'(s))}{R^ {p-1}}v(s)ds\\&
\nonumber   \geq 2\frac{f(R\phi(\beta))}{R^{p-1}}\int_{\beta}^{\frac{1}{2}}v(s)ds\\&
\geq 2\frac{f(R\phi(\beta))}{R^{p-1}}\Phi.\nonumber\\&>1\nonumber.
\end{align}
Thus, \begin{equation*}
    \frac{d}{dt}\mathcal{E}(v)(t)|_{t=R}=R^{p-1}\left(1-\int_0^1 \frac{f\left(R v(s),R v'(s)\right)}{R^{p-1}}v(s)ds \right)<0,
\end{equation*}
for all $v\in K_1$, so
 both inequalities in \eqref{capete} are valid.  

To verify \eqref{unica zero}, we note that by (H3), for each $v\in K_1$, the mapping \begin{equation*}
    t\mapsto\int_0^1 \frac{f\left(t v(s),t v'(s)\right)}{t^{p-1}}v(s)ds
\end{equation*}
is strictly increasing. Therefore, by \eqref{capete}, we have that 
\begin{equation*}
        1-\int_0^1 \frac{f\left(r v(s),r v'(s)\right)}{r^{p-1}}v(s)ds>0 \quad\text{ and }\quad        1-\int_0^1 \frac{f\left(R v(s),R v'(s)\right)}{R^{p-1}}v(s)ds<0 ,
\end{equation*}
so the equation \begin{equation*}
    1-\int_0^1 \frac{f\left(t v(s),t v'(s)\right)}{t^{p-1}}v(s)ds=0,
\end{equation*}
has a unique solution. This however implies that, for all $v\in K_1,$ the mapping\begin{equation*}
     t\mapsto \frac{d}{dt}\mathcal{E}(v)(t)=t^{p-1}\left(1-\int_0^1 \frac{f\left(t v(s),t v'(s)\right)}{t^{p-1}}v(s)ds \right),
\end{equation*}
has a unique zero, as desired.

\textit{Check of (h2).} Let $u\in U_b$. Then, by definition, we have\begin{equation*}
0=\mathcal{F}\left(T(u), u,\frac{u}{|u|_{1,p}} \right)=\left \langle Ju-J(T(u)),\frac{u}{|u|_{1,p}}\right\rangle,
\end{equation*}
which gives, \begin{equation*}
    \langle J(T(u)), u\rangle=\langle J u, u\rangle.
\end{equation*}
One has, \begin{equation*}
    \theta(|u|_{1,p})|u|_{1,p}=\langle J u, u\rangle=\langle J(T(u)), u\rangle\leq |J(T(u))|_{-1,q}\, |u|_{1,p}=\theta(|T(u)|_{1,p})|u|_{1,p},
\end{equation*}
whence
 \begin{equation*}
    \theta(|u|_{1,p}) \leq \theta(|T(u)|_{1,p}).
\end{equation*} By the monotonicity of \(\theta\) we obtain
\[
    |T(u)|_{1,p} \geq |u|_{1,p} \geq r,
\]
which shows that (h2) is satisfied.

\textit{Check of (h3).} Since $\theta(t)=t^{p-1}$, as shown in Corollary \ref{corolar}, the functional $\mathcal{F}$ satisfies (h3).

The conclusion follows from Theorem~\ref{t1}, as \( T \) is completely continuous and invariant over \( K \) (see Lemmas~\ref{invarianta T K} and \ref{T compact}), and conditions (h1)–(h3) are satisfied.
\end{proof}
\begin{remark}
    If the inequalities in \eqref{capete} are reversed, i.e.,
\begin{equation}\label{capete2}
    \left.\frac{d}{dt} \mathcal{E}(v)(t)\right|_{t=r} < 0, \quad
    \left.\frac{d}{dt} \mathcal{E}(v)(t)\right|_{t=R} > 0,
\end{equation}
then Theorem~\ref{t1} is still applicable.  
Indeed, in this case, the point \( t_v \) is a minimum point of the mapping \( t \mapsto \mathcal{E}(v)(t) \), so condition (h1$^{\ast}$) is satisfied instead of (h1).  
As noted in Remark~\ref{r3}, the theory still applies and yields the existence of a point \( u \in K \) such that \( r < |u|_{1,p} < R \), which, in this case, minimizes the associated energy functional.

It is straightforward to verify that if
\begin{equation}
    \frac{f_\infty(r\phi(\beta))}{(r\phi(\beta))^{p-1}} > \frac{1}{2\Phi \, \phi(\beta)^{p-1}}
    \quad \text{and} \quad
    \frac{f_0(R)}{R^{p-1}} < \frac{1}{c_{p}},
\end{equation}
are assumed instead of \eqref{capete}, then \eqref{capete2} holds.

\end{remark}
\begin{example}
A typical example of a function \( f \) that satisfies conditions (H1)–(H3) is
\[
    f(x,y) = x^{p} \left( 1 + \frac{1}{1 + |y|} \right), \quad x\in \mathbb{R}_+,\, y\in \mathbb{R}.
\]
Clearly, \( f \) satisfies (H1). Moreover, \( f_0(x) = |x|^{p} \) and \( f_\infty(x) = 2|x|^{p} \), and they satisfy
\[
    \lim_{x \to 0} \frac{f_0(x)}{x^{p-1}} 
    =  \lim_{x \to 0} \frac{f_\infty(x)}{x^{p-1}} = 0,
\]
and
\[
    \lim_{x \to \infty} \frac{f_0(x)}{x^{p-1}} 
    = \lim_{x \to \infty} \frac{f_\infty(x)}{x^{p-1}} = \infty.
\]
Therefore, (H2)–(H3) hold for sufficiently small \( r \) and sufficiently large \( R \), respectively.

\end{example}
\subsection{Radial functional energy with two critical points on each direction}

In this subsection, we construct an explicit example of an operator \(T\) and a functional \(\mathcal{F}\) such that, for each direction $v$, the associated radial energy functional $\mathcal{E}(v)$ 
has two critical points: one corresponding to a global maximum point and the other to a local minimum. We remark that, for such a problem, a an approach similar to Theorem~\ref{pa1} is not applicable, since there we require that require the mapping \( t \mapsto \mathcal{E}(v)(t) \) has a global maximizer, which is the unique critical point (relation \eqref{unica zero}). However, the abstract result, Theorem~\ref{t1}, can still be applied, as it only requires \( \mathcal{E}(v) \) to have a unique maximum point.  

This example is illustrative since many results based on the Nehari manifold method typically require that, for each direction \( v \), the mapping \( t \mapsto E(tv) \) has a unique critical point, which is a global
maximum point  (see, e.g., \cite[Chapter 3]{nehari}, \cite[Theorem 2.1]{figueredo}). Therefore, our results can indeed be applied to a larger class of problems.


We consider the fixed point  problem
\begin{align}\label{problem aplicatie concreta}
u(t) =  \int_0^1 k(t,s)\,f(u(s))\, ds =: T(u)(t), \quad  t \in (0,1),
\end{align}
where the kernel \( k\colon [0,1]^2\to \mathbb{R}_+ \) is given by
\[
k(t,s) =
\begin{cases}
t(1-s), & t \leq s, \\
s(1-t), & s \leq t,
\end{cases}
\]
and $f(x)=a_2 x^2 + a_1 x + a_0$ with coefficients
\[
a_2 = 10^{-2}, \quad a_1 = 2.5\times 10^{-3}, \quad a_0 = 1.
\]
In order to apply the abstract result, let $X=C[0,1]$,  endowed with the supremum norm
\[
|u|_\infty = \max_{t\in [0,1]} \,|u(t)|,
\] and let  the
cone \( K \subset X \) be given by
\[
K = \left\{ u \in C[0,1]: u \geq 0 \ \text{and} \ \min_{t \in \left[\frac{1}{4}, \frac{3}{4}\right]} u(t) \geq \frac{1}{4} |u|_{\infty} \right\}.
\]
The functional \( \mathcal{F} : K \times K \times K_1\to \mathbb{R} \) is chosen to be
\[
\mathcal{F}(u,v,w) = \int_0^1 v(t)w(t) \, dt - \int_0^1 u(t) w(t) \, dt.
\]
Standard arguments (see, e.g., \cite{infante}) show that \( T \) is  completely continuous from $X$ to $X$. It follows immediately that the cone $K$ is invariant under the operator $T$, i.e., $T(K)\subset K$. Indeed, first note that since $a_0, a_1, a_2>0$ then for any $u\geq 0$ we have $T(u)\geq 0$. Next, let $u\in K$ and $\sigma\in \left[\frac{1}{4}, \frac{3}{4}\right].$ Since (see, e.g., \cite{infante}),\begin{equation*}
    k(t,s)\geq \frac{1}{4} \Phi(s)\,\,\, \text{ for all }t\in \left[\frac{1}{4}, \frac{3}{4}\right] \,\,\text{and}\,\,s\in [0,1],
\end{equation*}
and \begin{equation*}
    k(t,s)\leq  \Phi(s)\,\,\, \text{ for all }\,t,s\in  [0,1],
\end{equation*} where $\Phi(s)=s(1-s)$, we have \begin{align*}
    T(u)(\sigma)&=\int_{0}^ {1}k(\sigma,s)f(u(s))ds\\&\geq \frac{1}{4}\int_{0}^ {1}\Phi(s)f(u(s))ds\\&\geq \frac{1}{4}\int_{0}^ {1}k(t,s)f(u(s))ds\\&=T(u)(t),
\end{align*}
for all $t\in [0,1]$. Thus, $T(u)\in K$.

Next, in order to provide  an explicit expression for the radial energy functional \(\mathcal{E}\), we introduce the following notation
\[
\alpha_u(k) = \int_0^1 \int_0^1 k(t,s)\, u(s)^k \, u(t)\, ds \, dt,
\]
for all \( u \in K \) and \( k \geq 0 \).
Let $v\in K_1$ and $\sigma>0$. Then, we have
\begin{align}\label{aplicatie concreta F}
   \mathcal{F}(T(\sigma v), \sigma v,v) &= 
   \sigma|v|_{L^2}^2- \int_0^1 T(\sigma v)(t)\, v(t) \, dt\\&=
   \sigma|v|_{L^2}^2- \int_0^1 \left( \int_0^1 k(t,s) f(\sigma v(s)) \, ds \right) v(t) \, dt.\nonumber
    \\&
    =-a_2 \sigma^2\alpha_v(2)+\sigma\left( |v|_{L^2}^2-a_1 \alpha_v(1) \right)-a_0 \alpha_v(0).\nonumber
\end{align}
Therefore, from \eqref{aplicatie concreta F}, one has
\begin{align}\label{E aplicatie concreta}
    \mathcal{E}
(v)(t)&=
\int_0^t  \mathcal{F}(T(\sigma v), \sigma v,v)d\sigma
\\&=-a_2 \frac{t^3}{3} \alpha_v(2)+\frac{t^2}{2} \left( |v|_{L^2}^2-a_1 \alpha_v(1) \right)-a_0 \alpha_v(0)t.
\end{align}
In what follows, we show that conditions (h1)-(h3) of Theorem \ref{t1} are satisfied in the entire cone $K$, that is, \begin{equation*}
    r=0\quad \text{ and }\quad R=+\infty.
\end{equation*}

\textit{Check of (h1).} Let $v\in K_1.$
Then,  one has
\begin{align*}
    & |v|_{L^2}^2=\int_0^1 v(s)^2ds\geq \int_{1/4}^{3/4} v (s)^2ds\geq \int_{1/4}^{3/4} \left( \frac{1}{4}|v|_\infty\right)^2ds=\frac{1}{32},
    \\&
    |v|_{L^2}^2=\int_0^1 v(s)^2ds\leq |v|_{\infty}=1,\\&
    \alpha_v(k)=\int_0^1 \int_0^1 k(t,s)\, v(s)^k \, v(t)\,  ds \, dt\leq |v|_{\infty}^{k+1}\int_0^1 \int_0^1 k(t,s) ds \, dt =\frac{1}{12},
    \\&
        \alpha_v(k)=\int_0^1 \int_0^1 k(t,s)\,v(s)^k \,  v(t)\, ds \, dt\geq     \frac{1}{ 4^{k+1}}|u|_{\infty}^{k+1}\int_{1/4}^{3/4} \int_{1/4}^{3/4} k(t,s)ds \, dt=\frac{1}{32}\frac{1}{4^{k+1}}.
\end{align*}
From these, we can deduce the subsequent robust estimates
\begin{align}\label{estimari}
&a_2 \alpha_v(2) \in \left[ 4.88 \times 10^{-6}, \, 8.34 \times 10^{-4} \right], \\&
\nonumber |v|_{L^2}^2 - a_1 \alpha_v(1) \in \left[ 3 \times 10^{-2}, \, 1 \right], \\&
a_0 \alpha_v(0) \in \left[ 7.81 \times 10^{-3}, \, 8.34 \times 10^{-2} \right].\nonumber
\end{align}
Our next goal is to show that, for any choice of the parameters
\begin{align}\label{b}
    &  b_2 \in \left[4.88 \times 10^{-6}, \, 8.34 \times 10^{-4}\right], 
    \,\, 
    b_1 \in \left[3 \times 10^{-2}, \, 1  \right], 
    \\&
    b_0 \in \left[ 7.81 \times 10^{-3}, \, 8.34 \times 10^{-2} \nonumber \right],  
\end{align}
the cubic function
\[
   g(t) := -\,b_2 \frac{t^3}{3} + b_1 \frac{t^2}{2} - b_0 t
\]
admits exactly one maximum point on the positive semi-axis ($t>0$).
To prove this, we look for the critical points of $g$. Differentiating gives 
\[
g'(t) = -b_2 t^2 + b_1 t - b_0,
\]
so critical points are solutions of the quadratic equation 
\[
0 = -b_2 t^2 + b_1 t - b_0.
\]
The discriminant satisfies 
\begin{equation}\label{estimare discriminant}
b_1^2 - 4 b_2 b_0 \geq 9 \times 10 ^{-4}  
- 2.79 \times 10^{-4}
> 4 \cdot 10^{-4} > 0,
\end{equation}
hence there are two distinct positive roots,
\[
t_{\pm} = \frac{b_1 \pm \sqrt{b_1^2 - 4 b_2 b_0}}{2 b_2} > 0.
\]
Notice that by \eqref{b}  and \eqref{estimare discriminant}, we have\begin{equation}\label{estimare t+}
    t_+= \frac{b_1 + \sqrt{b_1^2 - 4 b_2 b_0}}{2 b_2}> \frac{3\times 10^{-2}}{1.7\times 10^{-3}}>10,
\end{equation}
and 
\begin{equation}\label{estimare t+ 2}
     t_+= \frac{b_1 + \sqrt{b_1^2 - 4 b_2 b_0}}{2 b_2}<\frac{1}{b_2}<10^7.
\end{equation}
One clearly has,
\[
g(0) = 0, 
\quad g(t) < 0 \ \text{for sufficiently small } t>0, 
\quad \text{and} \qquad g(t) \to -\infty \ \text{as } t \to \infty.
\]
Hence, if
\begin{equation}\label{g t+ si g t-}
    g(t_{+}) > 0 
    \quad \text{and} \quad 
    g(t_{-}) < 0,
\end{equation}
then \(t_{+}\) is the unique global maximizer, while \(t_{-}\) corresponds to a local minimizer.

To verify \eqref{g t+ si g t-}, observe that since  
\[
-b_{2} t_\pm^{2} + b_{1} t_\pm - b_{0} = 0,
\]
we have  
\begin{align*}
   g(t_\pm) 
   &= \tfrac{1}{3} t_\pm \left( -b_2 t_\pm^2 + \tfrac{3}{2} b_1 t_\pm - 3 b_0 \right) \\
   &= \tfrac{1}{3} t_\pm \left( -b_2 t_\pm^2 + b_1 t_\pm - b_0 + \tfrac{1}{2} b_1 t_\pm - 2 b_0 \right) \\
   &= \tfrac{1}{3} t_\pm \left( \tfrac{1}{2} b_1 t_\pm - 2 b_0 \right) \\
   &= \tfrac{1}{6} t_\pm \left( b_1 t_\pm - 4 b_0 \right).
\end{align*}
Therefore, establishing \(g(t_{+})>0\) and \(g(t_{-})<0\) reduces to proving  
\[
b_{1} t_{+} - 4 b_{0} > 0 
\quad \text{and} \quad 
b_{1} t_{-} - 4 b_{0} < 0,
\]
respectively. The first inequality is immediate, since by 
 \eqref{estimare discriminant}, we have
\begin{align*}
    b_1 t_+ - 4 b_0
&= \frac{1}{2 b_2} \Big( b_1^2 + b_1 \sqrt{b_1^2 - 4 b_2 b_0} - 8 b_0 b_2 \Big)\\&
\geq \frac{1}{2 b_2} \Big(9 \times 10^{-4} + 6 10^{-4} - 8 \times 8.34^2\times 10^{-6} \Big)\\&\geq
\frac{1}{2 b_2} \Big(15 \times 10^{-4} -5.57\times 10^{-4} \Big)>0.
\end{align*}
 For the second one,
 denote $\gamma=4b_0b_2.$
Then, \begin{align*}
     b_1 t_- - 4 b_0&=\frac{1}{2b_2}\left( b_1^2-b_1\sqrt{b_1^2-\gamma}-2\gamma \right)\\&
     =\frac{1}{2b_2}\left( b_1^2-\gamma -b_1\sqrt{b_1^2-\gamma}-\gamma \right)\\&=\frac{1}{2b_2}\left(\sqrt{b_1^2-\gamma}\left( \sqrt{b_1^2-\gamma}-b_1\right)-\gamma \right)\\&
     <0,
\end{align*}
where the latter inequality follows since $\gamma>0$ and $\sqrt{b_1^2-\gamma}-b_1<0$. Hence, \( g(t_-) < 0, \) as desired.

From the above results and given the uniform estimates in \eqref{estimari} (which are independent of \( v \in K_1\)), we deduce that for each $v\in K_1$, the mapping
\[
t \mapsto \mathcal{E}(v)(t),
\]
  has exactly one maximum point on the positive semi-axis, that is $t_v=t_+$, which by \eqref{estimare t+} and \eqref{estimare t+ 2} is bounded away from both zero and infinity, more exactly,  
$r_{0}=10$  and  $R_{0}=10^{7}.$
Hence, condition (h1) is verified.

\textit{Check of (h2)}. Note that for any nonnegative function \(u \in C[0,1]\), we have  
\begin{equation*}
    |T(u)|_\infty 
    \geq \max_{t \in [0,1]} a_{0} \int_{0}^{1} k(t,s)\,ds
    = a_{0} \max_{t \in [0,1]} \frac{t(1-t)}{2} 
    = \frac{a_{0}}{4}.
\end{equation*}
Since every \(u \in U_{b}\) is nonnegative, it follows that  
\[
\inf_{u \in U_{b}} |T(u)|_{\infty} \geq \frac{a_{0}}{4} > 0,
\]
and therefore condition (h2) is satisfied.

\textit{Check of (h3)}. Assume that 
\begin{equation*}
    \mathcal{F}\left(t u,u,\frac{u}{|u|_\infty}\right)=0,
\end{equation*}
for some $t>0$ and $%
u\in K\setminus \{0\}.$ Then, \begin{equation*}
    \frac{1}{|u|_\infty} \left( |u|^2_{L^2}-t |u|^2_{L^2}\right)=0,\end{equation*}
    which yields $t=1$, so (h3) holds.

    Consequently, applying Theorem \ref{t1}, we deduce that $T$ has a fixed point in $U_b$.
\section*{Declarations}

%
\textbf{Data availability} This manuscript has no associated data.
 
\subsection*{Conflict of interest}
The authors have no conflict of interest to declare.








\end{document}